\newtheorem{prop}[subsection]{Proposition}
\newtheorem{teor}[subsection]{Theorem}
\newtheorem{lema}[subsection]{Lemma}
\newtheorem{cor} [subsection]{Corollary}
\theoremstyle{definition}
\theoremstyle{remark}
\newtheorem{obs} [subsection]{Remark}
\newcommand{\Zng}{$\mathbb Z^n$-graded $S$-module}
\def\sdepth{\operatorname{sdepth}}
\def\qdepth{\operatorname{hdepth}}
\def\hdepth{\operatorname{hdepth}}
\def\depth{\operatorname{depth}}
\def\deg{\operatorname{deg}}
\numberwithin{equation}{section}
\begin{document}

\title[Remarks on the Hilbert depth of powers of the maximal graded ideal]{Remarks on the hdepth of powers of the maximal graded ideal}
%\title{On the quasi depth of monomial ideals}
\author[Silviu B\u al\u anescu, Mircea Cimpoea\c s % and Christian Krattenthaler
       ]{Silviu B\u al\u anescu$^1$ and Mircea Cimpoea\c s$^2$}  
\date{}

\keywords{Stanley depth, Hilbert depth, Depth, Monomial ideal}

\subjclass[2020]{05A18, 06A07, 13C15, 13P10, 13F20}

\footnotetext[1]{ \emph{Silviu B\u al\u anescu}, University Politehnica of Bucharest, Faculty of
Applied Sciences, %Department of Mathematical Methods and Models, 
Bucharest, 060042, E-mail: silviu.balanescu@stud.fsa.upb.ro}
\footnotetext[2]{ \emph{Mircea Cimpoea\c s}, University Politehnica of Bucharest, Faculty of
Applied Sciences, %Department of Mathematical Methods and Models, 
Bucharest, 060042, Romania and Simion Stoilow Institute of Mathematics, Research unit 5, P.O.Box 1-764,
Bucharest 014700, Romania, E-mail: mircea.cimpoeas@upb.ro,\;mircea.cimpoeas@imar.ro}
% \footnotetext[3]{\emph{Christian Krattenthaler}, Fakult\"at f\"ur Mathematik,
% Universit\"at Wien Oskar-Morgenstern-Platz 1 A-1090 Vienna, Austria, E-mail: Christian.Krattenthaler@univie.ac.at}

\begin{abstract}
Let $\mathbf m=(x_1,\ldots,x_n)$ be the maximal graded ideal of $S:=K[x_1,\ldots,x_n]$. We present a new method
for computing the Hilbert depth of powers of $\mathbf m$.
%We prove that $\qdepth(\mathbf m^t)\leq \left\lceil \frac{n}{t+1} \right\rceil$, for any $t\geq 1$, and,
%moreover, that $\qdepth(\mathbf m^t) = \left\lceil \frac{n}{t+1} \right\rceil$ in the cases (i) $t\leq 2$, (ii) $t\geq n-1$
%and (iii) $n\leq (t+1)(t+3)$. %We strongly believe that the equality holds for any $t\geq 1$.
\end{abstract}

\maketitle

\section{Introduction}

Let $K$ be a field and $S=K[x_1,\ldots,x_n]$ the polynomial ring over $K$.
% We denote $\me=(x_1,\ldots,x_n)$ the maximal graded ideal of $S$.
Let $M$ be a \Zng. A \emph{Stanley decomposition} of $M$ is a direct sum $\mathcal D: M = \bigoplus_{i=1}^rm_i K[Z_i]$ as a 
$\mathbb Z^n$-graded $K$-vector space, where $m_i\in M$ is homogeneous with respect to $\mathbb Z^n$-grading, 
$Z_i\subset\{x_1,\ldots,x_n\}$ such that $m_i K[Z_i] = \{um_i:\; u\in K[Z_i] \}\subset M$ is a free $K[Z_i]$-submodule of $M$. 
We define $\sdepth(\mathcal D)=\min_{i=1,\ldots,r} |Z_i|$ and $$\sdepth(M)=\max\{\sdepth(\mathcal D)|\;\mathcal D\text{ is 
a Stanley decomposition of }M\}.$$ The number $\sdepth(M)$ is called the \emph{Stanley depth} of $M$. 

Herzog, Vladoiu and Zheng show in \cite{hvz} that $\sdepth(M)$ can be computed in a finite number of steps if $M=I/J$, 
where $J\subset I\subset S$ are monomial ideals. %In \cite{rin}, Rinaldo give a computer implementation for this algorithm, 
%in the computer algebra system $\mathtt{CoCoA}$ \cite{cocoa}. 
In \cite{apel}, J.\ Apel restated a conjecture firstly given by Stanley in 
\cite{stan}, namely that $$\sdepth(M)\geq\depth(M),$$ for any \Zng $\;M$. This conjecture proves to be false, in general, for 
$M=S/I$ and $M=J/I$, where $0\neq I\subset J\subset S$ are monomial ideals, see \cite{duval}, but remains open for $M=I$.
For a friendly introduction in the thematic of Stanley depth, we refer to \cite{her}.

Stanley depth is an important combinatorial invariant and deserves a thorough study. The explicit computation of the Stanley depth is a
difficult task, even in some, seemingly, very simple cases as the maximal graded ideal $\mathbf m=(x_1,\ldots,x_n)$ of $S$, see \cite{biro}.
Let $t\geq 1$ be an integer. In \cite[Theorem 2.2]{mir} it was proved that $\sdepth(\mathbf m^t)\leq \left\lceil \frac{n}{t+1} \right\rceil$.
Also, in \cite{mir} it was conjectured that $\sdepth(\mathbf m^t) = \left\lceil \frac{n}{t+1} \right\rceil$ for any $t\geq 1$. This conjecture
holds for $t=1$, see \cite[Theorem 2.2]{biro}, and, also, for $t\geq n-1$, but it is open in general.

Let $M$ be a finitely generated graded $S$-module.
The Hilbert depth of $M$, denoted by $\hdepth(M)$, is the maximal depth of a finitely generated graded $S$-module $N$ with the same Hilbert series as $M$.
Bruns et al. \cite{bruns} proved that $\qdepth(\mathbf m^t)=\left\lceil \frac{n}{t+1} \right\rceil$ for any $t\geq 1$.
In \cite{lucrare2} we proved a new formula for the Hilbert depth of a quotient $J/I$ of two squarefree monomial ideals 
$I\subset J\subset S$, see Theorem \ref{d1}. Also, we extended this method, through polarization, to a quotient $J/I$ of two arbitrary monomial ideals, 
see Proposition \ref{d2}. %We also proved that $\qdepth(J/I)$ is an upper bound for $\sdepth(S/I)$, see Proposition \ref{p1} and Proposition \ref{p2}.
The aim of this note is to study the Hilbert depth of $\mathbf m^t$, where $t\geq 1$ is an integer, from this new perspective.

% In order to do so, we consider the ideal $I_t$ in $R_t:=K[x_1,\ldots,x_{nt}]$ obtained from $\mathbf m^t$ by polarization.
In Theorem \ref{teo} we prove that
$\qdepth(\mathbf m^t)\leq \left\lceil \frac{n}{t+1} \right\rceil$ for any $t\geq 1$. We deduce that 
$\qdepth(\mathbf m)=\sdepth(\mathbf m)=\left\lceil \frac{n}{2} \right\rceil$, see Corollary \ref{cteo}.
In Theorem \ref{main} we prove that $\qdepth(\mathbf m^t)=1$ for $t\geq n-1$ and $\qdepth(\mathbf m^2)=\left\lceil \frac{n}{3} \right\rceil$. 
Also, in Theorem \ref{teo3} we show that $\qdepth(\mathbf m^t) = \left\lceil \frac{n}{t+1} \right\rceil$, if $n\leq (t+1)(t+3)$. 
% We conjecture that $\qdepth(\mathbf m^t) = \left\lceil \frac{n}{t+1} \right\rceil$ for any $t\geq 1$. (Conjecture \ref{pi})

%%%%%%%%%%%%%%%%%%%%%%%%%%%%%%%%%%%%%%%%%%%%%%%%%
\section{Preliminaries}

First, we fix some notations and we recall the main result of \cite{lucrare2}. 

Let $K$ be an infinite field and $S:=K[x_1,\ldots,x_n]$, the ring of polynomials in $n$ variables over $K$.
Let $I\subsetneq J\subset S$ be two square free monomial ideals. We consider the nonnegative integers
$\alpha_k(J/I):=\# \{u\in S\;:\;u\text{ squarefree, with }u\in J\setminus I\text{ and }\deg(u)=k\},\;0\leq k\leq n.$

For all $0\leq d\leq n$ and $0\leq k\leq d$, we consider the integers
\begin{equation}\label{betak}
  \beta_k^d(J/I):=\sum_{j=0}^k (-1)^{k-j} \binom{d-j}{k-j} \alpha_j(J/I).
\end{equation}
Note that, using an inverse formula, from \eqref{betak} we deduce that
\begin{equation}\label{alfak}
  \alpha_k(J/I):=\sum_{j=0}^k \binom{d-j}{k-j} \beta^d_j(J/I).
\end{equation}
With the above notations we have the following result:

\begin{teor}(\cite[Theorem 2.4]{lucrare2})\label{d1}
The Hilbert depth of $J/I$ is
$$\qdepth(J/I):=\max\{d\;:\;\beta_k^d(J/I) \geq 0\text{ for all }0\leq k\leq d\}.$$
\end{teor}

Theorem \ref{d1} can be applied, indirectly, via polarization, in the non squarefree case, as follows. 
If $I\subsetneq J\subset S$ are two monomial ideals, then we consider their polarizations $I^p\subset J^p\subset R$,
where $R$ is a new ring of polynomials obtained from $S$ by adding $N$ new variables:

\begin{prop}\label{d2}
The \emph{Hilbert depth} of $J/I$ is the number
$$\qdepth(J/I):=\qdepth(J^p/I^p)-N.$$
\end{prop}

Also, as the Hilbert depth is an upper bound of the Stanley depth, in particular we have:
\begin{equation}\label{p2}
\sdepth(J/I)\leq \qdepth(J/I).
\end{equation}

\section{Main results}

Let $n\geq 2$ and $S=K[x_1,\ldots,x_n]$.
Let $\mathbf m=(x_1,\ldots,x_n)\subset S$ be the maximal graded ideal and $t\geq 1$ and integer.
We denote by $I_t$, the polarization of the ideal $\mathbf m^t$, in the ring $R_t:=K[x_1,\ldots,x_{nt}]$, that is
$$I_t=\left(\prod_{j=1}^n x_{j}x_{n+j}\cdots x_{n(i_j-1)+j}\;:\;i_1,\ldots,i_j\geq 0\text{ with }i_1+\cdots+i_n=t\right).$$
In other words, the polarization of $x_j^a$ is $x_jx_{n+j}\cdots x_{n(a-1)+j}$, for any $1\leq j\leq n$ and $a\geq 0$.

\begin{lema}\label{l1}
With the above notations, we have
$$\alpha_k(R_t/I_t)=\sum_{j=0}^{t-1} \binom{nt-n-j}{k-j} \binom{n+j-1}{j}\text{ for all }0\leq k\leq nt.$$
\end{lema}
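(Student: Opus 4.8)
The plan is to translate everything into the explicit combinatorial picture supplied by the polarization and then count the squarefree monomials of $R_t\setminus I_t$ degree by degree. First I would arrange the $nt$ variables of $R_t$ as an $n\times t$ array, placing $x_{n(i-1)+\ell}$ in row $\ell$ and column $i$, so that row $\ell$ records the successive ``copies'' of the original variable $x_\ell$. With this convention the generator of $I_t$ attached to a weak composition $i_1+\cdots+i_n=t$ is precisely the squarefree monomial occupying, in each row $\ell$, the initial block of columns $1,\ldots,i_\ell$. A squarefree monomial $u\in R_t$ then corresponds to a choice of a subset $S_\ell\subseteq\{1,\ldots,t\}$ of columns in each row $\ell$, with $\deg(u)=\sum_{\ell=1}^n|S_\ell|$, and to each row I attach its \emph{prefix length} $p_\ell:=\max\{p:\{1,\ldots,p\}\subseteq S_\ell\}$, setting $p_\ell=0$ when $1\notin S_\ell$.

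The key step is the membership criterion. The monomial $u$ is divisible by the generator indexed by $(i_1,\ldots,i_n)$ exactly when $i_\ell\leq p_\ell$ for every $\ell$, so $u\in I_t$ if and only if some weak composition of $t$ is dominated entrywise by $(p_1,\ldots,p_n)$, which happens if and only if $\sum_\ell p_\ell\geq t$. Hence
$$u\notin I_t\iff \sum_{\ell=1}^n p_\ell\leq t-1.$$
I regard establishing this equivalence cleanly, together with the observation that the prefix-length profile $(p_1,\ldots,p_n)$ is uniquely determined by $u$, as the heart of the argument; it is the point where one must be careful, since the whole count relies on the strata below being genuinely disjoint and exhaustive.

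To finish, I would stratify the squarefree monomials of $R_t\setminus I_t$ of degree $k$ by the total prefix length $j:=\sum_{\ell}p_\ell\in\{0,1,\ldots,t-1\}$. For a fixed value $j$, the individual prefix lengths range over the weak compositions $p_1+\cdots+p_n=j$, of which there are $\binom{n+j-1}{j}$. Once these are chosen, row $\ell$ has its first $p_\ell$ columns forced in, its column $p_\ell+1$ forced out (this column exists because $p_\ell\leq j\leq t-1$, which handles the boundary case $p_\ell=t-1$ as well), and its remaining $t-p_\ell-1$ columns free; the total number of free positions is $\sum_\ell(t-p_\ell-1)=nt-n-j$. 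Since the forced prefixes already contribute $j$ to the degree, attaining degree $k$ amounts to selecting $k-j$ of the free positions, in $\binom{nt-n-j}{k-j}$ ways. Summing the product $\binom{n+j-1}{j}\binom{nt-n-j}{k-j}$ over $j=0,\ldots,t-1$ yields the asserted formula; the main obstacle, as noted, is not the counting but verifying the membership equivalence and the bijectivity of this decomposition.
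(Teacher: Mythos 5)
Your proof is correct and rests on the same idea as the paper's: both characterize the squarefree monomials outside $I_t$ by the "prefix" structure of each row of the $n\times t$ array of polarized variables (the paper encodes your $p_\ell$ via the integers $a_1,\ldots,a_s$ attached to the $s$ rows meeting column $1$) and then count the free positions. Your stratification directly by the full prefix profile $(p_1,\ldots,p_n)$ with $\sum_\ell p_\ell=j$ is a cleaner bookkeeping that gives $\binom{n+j-1}{j}$ at once, whereas the paper first sums over $s$ and then needs the Chu--Vandermonde identity $\sum_{s}\binom{j-1}{s-1}\binom{n}{s}=\binom{n+j-1}{j}$ to reach the same expression.
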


\begin{proof}
Let $u=x_{i_1}x_{i_2}\cdots x_{i_k}$ with $1\leq i_1<i_2<\cdots<i_k\leq nt$ be a (squarefree) monomial.
If $i_1\geq n+1$, i.e. $\{i_1,\ldots,i_k\}\subset \{n+1,\ldots,nt\}$, then $u\notin I_t$. 
Note that there are $\binom{nt-n}{k}$ such monomials. 
On the other hand, if $k\geq t$, $t\leq n$ and $i_t\leq n$, i.e. 
$\{i_1,\ldots,i_t\}\subset \{1,2,\ldots,n\}$, then $u\in I_t$.

Now, assume that $\{i_1,\ldots,i_s\}\subset \{1,\ldots,n\}$ and $\{i_{s+1},\ldots,i_k\}\subset\{n+1,\ldots,nt\}$, where
$1\leq s\leq t-1$. Note that $u\notin I_t$ if and only if there exists some nonnegative integers $a_1,\ldots,a_s$ such
that $0\leq \ell=a_1+\cdots+a_s\leq t-1-s$,
\begin{align*}
& L_{a_1,\ldots,a_s}:=\bigcup_{j=1}^s \{i_j+n,i_j+2n,\ldots,i_j+(a_j-1)n\}\subset \{i_{s+1},\ldots,i_k\}\\
& \text{and }\{i_{s+1},\ldots,i_k\}\setminus L_{a_1,\ldots,a_s} \subset \{n,\ldots,nt\}\setminus 
  \left(L_{a_1,\ldots,a_s}\cup\{i_1+na_1,\ldots,i_s+na_s\}\right).
\end{align*}
It follows that the number of such monomials $u\notin I_t$ is
$$\binom{n}{s}\cdot \sum_{\ell=0}^{t-1-s} \sum_{\substack{a_1,\ldots,a_s\geq 0 \\ a_1+\cdots+a_s=\ell}} \binom{nt-n-s-\ell}{k-s-\ell}
= \binom{n}{s}\cdot \sum_{\ell=0}^{t-1-s} \binom{s+\ell-1}{s-1} \binom{nt-n-s-\ell}{k-s-\ell} .$$
Using the first part of the proof, it follows that
\begin{equation}\label{cico}
\alpha_k(R_t/I_t)=\binom{nt-n}{k}+\sum_{s=1}^{t-1} \binom{n}{s} \sum_{\ell=0}^{t-1-s} \binom{s+\ell-1}{s-1} \binom{nt-n-s-\ell}{k-s-\ell}.
\end{equation}
Denoting $j=s+\ell$ in \eqref{cico}, we get
\begin{align*}
& \alpha_k(R_t/I_t)=\binom{nt-n}{k}+\sum_{s=1}^{t-1} \binom{n}{s} \sum_{j=s}^{t-1} \binom{j-1}{s-1} \binom{nt-n-j}{k-j} = 
\binom{nt-n}{k}+\\
& +\sum_{j=1}^{t-1} \binom{nt-n-j}{k-j} \sum_{s=1}^j \binom{j-1}{s-1}\binom{n}{s} = \binom{nt-n}{k}+
\sum_{j=1}^{t-1} \binom{nt-n-j}{k-j} \binom{n+j-1}{j},
\end{align*}
as required.
\end{proof}

We state the following combinatorial formulas
\begin{equation}\label{magic}
\sum_{j=0}^k (-1)^{k-j}\binom{d-j}{k-j}\binom{n}{j} = \binom{n-d+k-1}{k}\text{ for all }0\leq k\leq d\leq n,
\end{equation}
\begin{equation}\label{magic2}
\sum_{\ell=0}^k (-1)^{k-\ell}\binom{n+\ell-1}{\ell}\binom{d}{k-\ell} = \binom{n-d+k-1}{k}\text{ for all }k,d,n\geq 0,
\end{equation}
which can be easily deduced from the Chu-Vandermonde identity.

\begin{lema}\label{l2}
For any $0\leq k\leq d\leq nt$, we have that
$$\beta_k^d(R_t/I_t)=\sum_{\ell=0}^{t-1} \binom{n+\ell-1}{\ell} \binom{tn-n-d+k-\ell-1}{k-\ell}.$$
\end{lema}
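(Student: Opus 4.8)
The plan is to compute $\beta_k^d(R_t/I_t)$ directly from its definition \eqref{betak} by inserting the closed form for $\alpha_k(R_t/I_t)$ from Lemma \ref{l1}, and then to collapse the resulting double sum by means of the combinatorial identity \eqref{magic}.

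First I would expand
\[
\beta_k^d(R_t/I_t)=\sum_{i=0}^k (-1)^{k-i}\binom{d-i}{k-i}\alpha_i(R_t/I_t)
=\sum_{i=0}^k (-1)^{k-i}\binom{d-i}{k-i}\sum_{j=0}^{t-1}\binom{nt-n-j}{i-j}\binom{n+j-1}{j}.
\]
Interchanging the two summations and factoring out $\binom{n+j-1}{j}$, which does not depend on $i$, gives
\[
\beta_k^d(R_t/I_t)=\sum_{j=0}^{t-1}\binom{n+j-1}{j}\sum_{i=0}^{k}(-1)^{k-i}\binom{d-i}{k-i}\binom{nt-n-j}{i-j}.
\]
Since $\binom{nt-n-j}{i-j}=0$ whenever $i<j$, the inner sum effectively runs over $i\geq j$, so the substitution $p=i-j$ turns it into $\sum_{p\geq 0}(-1)^{(k-j)-p}\binom{(d-j)-p}{(k-j)-p}\binom{nt-n-j}{p}$.

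Next I would recognize this inner sum as an instance of \eqref{magic} with parameters $k\mapsto k-j$, $d\mapsto d-j$, $n\mapsto nt-n-j$. Applying the identity evaluates it to $\binom{(nt-n-j)-(d-j)+(k-j)-1}{k-j}=\binom{tn-n-d+k-j-1}{k-j}$, and summing over $j$ (renamed $\ell$) produces exactly the claimed formula. Note also that for $j>k$ the factor $\binom{tn-n-d+k-j-1}{k-j}$ vanishes since $k-j<0$, matching the empty inner sum, so the re-indexing is consistent across the whole range $0\le\ell\le t-1$.

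The one point requiring genuine care is the range of validity of \eqref{magic}: as stated it requires $0\le k\le d\le n$, whereas here the analogous inequality $d-j\le nt-n-j$, i.e. $d\le nt-n$, need not hold for all admissible $d\le nt$. I would handle this by observing that, with the summation length fixed to $0\le j\le k$, both sides of \eqref{magic} are polynomials in the two parameters $d$ and $n$ (using the conventions $\binom{a}{b}=0$ for $b<0$ and the generalized binomial coefficient for negative upper index). As these polynomials agree on the full-dimensional integer region $\{0\le k\le d\le n\}$, they coincide identically, hence the identity remains valid at the particular integer values occurring here even when $d>nt-n$. (In our setting the parameter $nt-n-j$ is in fact always a nonnegative integer, so only the $d$-side ever leaves the nominal region.) This polynomiality argument is the crux of the proof; everything else is the bookkeeping of the two summation interchanges and the single substitution.
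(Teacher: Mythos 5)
Your proof is correct and follows essentially the same route as the paper's: substitute the formula of Lemma \ref{l1} into \eqref{betak}, interchange the two sums, shift the inner index, and collapse it with \eqref{magic}. The only difference is that you explicitly justify applying \eqref{magic} outside its nominal range $d\leq n$ via a polynomiality argument — a point the paper passes over silently — which is a welcome addition rather than a deviation.
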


\begin{proof}
From Lemma \ref{l1} and \eqref{betak} it follows that
$$ \beta_k^d(R_t/I_t) = \sum_{j=0}^k (-1)^{k-j} \binom{d-j}{k-j} \sum_{\ell=0}^{t-1} \binom{tn-n-\ell}{j-\ell} \binom{n+\ell-1}{\ell} = $$
\begin{equation}\label{e32}
 = \sum_{\ell=0}^{t-1} \binom{n+\ell-1}{\ell} \sum_{j=\ell}^k (-1)^{k-j}\binom{d-j}{k-j}\binom{tn-n-\ell}{j-\ell}.
\end{equation}
Using the substitution $s=j-\ell$ in \eqref{e32} and applying \eqref{magic}, we obtain
\begin{align*}
& \beta_k^d(R_t/I_t) = \sum_{\ell=0}^{t-1} \binom{n+\ell-1}{\ell} \sum_{s=0}^{k-\ell} (-1)^{k-\ell-s}\binom{d-\ell-s}{k-\ell-s}
\binom{tn-n-\ell}{s} = \\
& = \sum_{\ell=0}^{t-1} \binom{n+\ell-1}{\ell} \binom{tn-n-d+k-\ell-1}{k-\ell},
\end{align*}
as required.
\end{proof}

\begin{obs}\rm
As $\dim(S/\mathbf m^t)=0$, from the definition of the Hilbert depth we have that $\hdepth(S/\mathbf m^t)=0$. 
We mention that this result can be deduce also directly from Lemma \ref{l2}: Indeed, if $d=tn-n$, according to Lemma \ref{l2} we have that
$$\beta_k^{tn-n}(R_t/I_t)=\sum_{\ell=0}^{t-1} \binom{n+\ell-1}{\ell} \binom{k-\ell-1}{k-\ell} = \begin{cases} \binom{n+k-1}{k},& 
0\leq k\leq t-1 \\ 0,& t\leq k\leq nt\end{cases}.$$
On the other hand, we have that
$$\beta_{t}^{tn-n+1}(R_t/I_t) = -\binom{n+t-2}{t-1} < 0.$$
Thus, $\qdepth(R_t/I_t)=nt-n$ and, therefore, $\qdepth(S/\mathbf m^t)=0$.

Note that $\depth(S/\mathbf m^t)=\sdepth(S/\mathbf m^t)=0$, since $\mathbf m$ is an associated prime to $S/\mathbf m^t$.
\end{obs}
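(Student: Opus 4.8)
The plan is to pass to the polarization and compute $\qdepth(R_t/I_t)$ exactly, then invoke Definition \ref{d2}. Since $R_t=K[x_1,\ldots,x_{nt}]$ is obtained from $S=K[x_1,\ldots,x_n]$ by adjoining $N=n(t-1)$ new variables, we have $\qdepth(S/\mathbf m^t)=\qdepth(R_t/I_t)-N$, so it suffices to show that $\qdepth(R_t/I_t)=tn-n$. The whole computation rests on the closed formula of Lemma \ref{l2}, which writes $\beta_k^d(R_t/I_t)$ as a short sum of products of binomial coefficients; the two halves of the argument amount to evaluating that formula at the critical values $d=tn-n$ and $d=tn-n+1$.

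For the lower bound $\qdepth(R_t/I_t)\geq tn-n$, I would set $d=tn-n$ in Lemma \ref{l2}, so that $tn-n-d=0$ and the second factor becomes $\binom{k-\ell-1}{k-\ell}$. Read with the standard convention for nonpositive top arguments, this binomial equals $1$ when $\ell=k$ and $0$ otherwise, so the sum collapses to a single term: $\beta_k^{tn-n}(R_t/I_t)=\binom{n+k-1}{k}$ for $0\leq k\leq t-1$, and $\beta_k^{tn-n}(R_t/I_t)=0$ for $t\leq k\leq tn-n$ (since then $k-\ell\geq 1$ for every admissible $\ell$). In either case $\beta_k^{tn-n}(R_t/I_t)\geq 0$ for all $0\leq k\leq tn-n$, which by Definition \ref{d1} gives $\qdepth(R_t/I_t)\geq tn-n$.

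For the matching upper bound I would show that the next value of $d$ fails the nonnegativity test. Setting $d=tn-n+1$ in Lemma \ref{l2} turns the second factor into $\binom{k-\ell-2}{k-\ell}$, which vanishes except when $\ell=k$ (giving $1$) or $\ell=k-1$ (giving $-1$). Taking $k=t$ kills the $\ell=k$ term, because the summation index $\ell$ runs only up to $t-1$, and leaves exactly the $\ell=t-1$ contribution, namely $\beta_t^{tn-n+1}(R_t/I_t)=-\binom{n+t-2}{t-1}<0$. Hence $\qdepth(R_t/I_t)\leq tn-n$, and combined with the previous paragraph $\qdepth(R_t/I_t)=tn-n$. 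Subtracting $N=n(t-1)$ yields $\qdepth(S/\mathbf m^t)=0$, as claimed. (Alternatively, $\dim(S/\mathbf m^t)=0$ already forces $\qdepth(S/\mathbf m^t)=0$, but the route above is self-contained from Lemma \ref{l2}.)

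I expect the only delicate point to be the bookkeeping of binomial coefficients with small or nonpositive arguments: verifying precisely that $\binom{k-\ell-1}{k-\ell}$ and $\binom{k-\ell-2}{k-\ell}$ collapse the sums as stated, and that the single surviving term at $(d,k)=(tn-n+1,t)$ is negative and uncancelled. Once Lemma \ref{l2} is granted, no further structural idea is needed; the difficulty lies entirely in reading these edge cases correctly. Finally, the accompanying assertion $\depth(S/\mathbf m^t)=\sdepth(S/\mathbf m^t)=0$ is immediate, since $\mathbf m$ is an associated prime of $S/\mathbf m^t$, which forces both invariants to vanish.
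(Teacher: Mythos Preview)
Your proposal is correct and follows exactly the paper's own argument: plug $d=tn-n$ and $d=tn-n+1$ into Lemma~\ref{l2}, collapse the sums via the edge values of $\binom{k-\ell-1}{k-\ell}$ and $\binom{k-\ell-2}{k-\ell}$, and read off $\qdepth(R_t/I_t)=tn-n$. You supply a bit more detail on why those binomial coefficients vanish or equal $\pm 1$, but the route and the key computation are identical.
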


\begin{prop}\label{ppp}
For any $0\leq k\leq d\leq nt$, we have that
$$\beta_k^d(I_t)=\binom{nt-d+k-1}{k} - \sum_{\ell=0}^{t-1} \binom{n+\ell-1}{\ell} \binom{tn-n-d+k-\ell-1}{k-\ell}.$$
\end{prop}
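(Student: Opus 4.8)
The plan is to exploit the additivity of the invariants $\alpha_k$ together with the linearity of the operator defined in \eqref{betak}, so that the statement reduces to Lemma \ref{l2} and the identity \eqref{magic}. First I would observe that every squarefree monomial of degree $k$ in $R_t$ either lies in $I_t$ or in its complement $R_t\setminus I_t$, and these alternatives are mutually exclusive and exhaustive. Since the total number of squarefree monomials of degree $k$ in the $nt$ variables is $\binom{nt}{k}$, this yields
$$\alpha_k(I_t)+\alpha_k(R_t/I_t)=\binom{nt}{k},\quad 0\le k\le nt,$$
where $\alpha_k(I_t)$ denotes the number of squarefree degree-$k$ monomials contained in $I_t$.

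Next I would transfer this relation to the level of the $\beta$-numbers. The defining formula \eqref{betak} writes $\beta_k^d$ as a fixed linear combination of the $\alpha_j$'s, with coefficients $(-1)^{k-j}\binom{d-j}{k-j}$ that do not depend on the module, so the passage $(\alpha_j)_j\mapsto(\beta_k^d)_k$ is additive. Consequently
$$\beta_k^d(I_t)=\beta_k^d(R_t)-\beta_k^d(R_t/I_t),$$
and it remains only to evaluate the two terms on the right.

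For the first term I would substitute $\alpha_j(R_t)=\binom{nt}{j}$ into \eqref{betak} and apply \eqref{magic} with $n$ replaced by $nt$, obtaining
$$\beta_k^d(R_t)=\sum_{j=0}^{k}(-1)^{k-j}\binom{d-j}{k-j}\binom{nt}{j}=\binom{nt-d+k-1}{k},$$
which is valid precisely on the range $0\le k\le d\le nt$ appearing in the statement. The second term $\beta_k^d(R_t/I_t)$ is already given by Lemma \ref{l2}. Subtracting the two produces exactly the claimed expression, so no further manipulation is needed.

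I do not expect a genuine analytic obstacle here once the additivity is in place; the one point that demands care is the bookkeeping for the invariant attached to the ideal $I_t$ itself (formally $I_t/(0)$) rather than to a quotient $J/I$ with $I$ nonzero. One must confirm that $\alpha_k(I_t)$ is literally the count of squarefree monomials lying in $I_t$, so that its complementarity with $\alpha_k(R_t/I_t)$ inside $R_t$ is a genuine partition of all $\binom{nt}{k}$ squarefree monomials of degree $k$; granting this, the whole argument collapses to Lemma \ref{l2} and the identity \eqref{magic}.
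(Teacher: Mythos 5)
Your proposal is correct and follows essentially the same route as the paper: the complementarity $\alpha_j(I_t)=\binom{nt}{j}-\alpha_j(R_t/I_t)$, the linearity of the transformation \eqref{betak}, the evaluation of $\beta_k^d(R_t)$ via \eqref{magic} with $n$ replaced by $nt$, and the subtraction of the expression from Lemma \ref{l2}. The only difference is that you spell out the bookkeeping (in particular the identification of $\alpha_k(I_t)$ with the count of squarefree degree-$k$ monomials in $I_t$) that the paper leaves implicit.
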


\begin{proof}
Since $\alpha_j(I_t)=\binom{nt}{j}-\alpha_j(R_t/I_t)$ for all $0\leq j\leq nt$, from \eqref{magic} we get
$$\beta_k^d(I_t)=\binom{nt-d+k-1}{k} - \beta_k^d(R_t/I_t).$$
Hence, the conclusion follows from Lemma \ref{l2}.
\end{proof}

\begin{prop}\label{propou}
With the above notations, we have that: \large
\begin{enumerate}
\item[(1)] $\beta_{t+1}^{nt-n+\left\lceil \frac{n}{t+1} \right\rceil+1}(I_t)= \binom{n-\left\lceil \frac{n}{t+1} \right\rceil+t-1}{t+1} + 
                    \sum\limits_{\ell=0}^{t-1}(-1)^{t-\ell} \binom{n+\ell-1}{\ell}\binom{\left\lceil \frac{n}{t+1} \right\rceil+1}{t+1-\ell}$.
\item[(2)] $\beta_{k}^{nt-n+\left\lceil \frac{n}{t+1} \right\rceil}(I_t)=\binom{n-\left\lceil \frac{n}{t+1} \right\rceil+k-1}{k}-
\sum\limits_{\ell=0}^{t-1}(-1)^{k-\ell} \binom{n+\ell-1}{\ell}\binom{\left\lceil \frac{n}{t+1} \right\rceil}{k-\ell}$, \normalsize 
for all 

$t+1\leq k\leq nt-n+\left\lceil \frac{n}{t+1} \right\rceil$.
\end{enumerate}
%We have that $\qdepth(\mathbf m^t)\leq \left\lceil \frac{n}{t+1} \right\rceil$.
\end{prop}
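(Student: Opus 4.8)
The plan is to obtain both identities as direct specializations of the closed formula in Proposition \ref{ppp}, which expresses $\beta_k^d(I_t)$ as the difference of a single binomial coefficient $\binom{nt-d+k-1}{k}$ and the sum $\sum_{\ell=0}^{t-1}\binom{n+\ell-1}{\ell}\binom{tn-n-d+k-\ell-1}{k-\ell}$. Throughout I would write $c:=\left\lceil \frac{n}{t+1} \right\rceil$ for brevity. The entire computation reduces to substituting the relevant value of $d$ into the two arguments $nt-d+k-1$ and $tn-n-d+k-\ell-1$, and then rewriting the resulting binomial coefficients.

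For part (2) I would set $d=nt-n+c$. A one-line computation gives $nt-d+k-1=n-c+k-1$, so the leading term is already $\binom{n-c+k-1}{k}$ as claimed, and $tn-n-d+k-\ell-1=k-\ell-c-1$. The only genuine step is to recognize that the second binomial has a (generally negative) upper index and to apply the upper-negation identity $\binom{a}{b}=(-1)^b\binom{b-a-1}{b}$, valid for every integer $b\geq 0$. Since $k\geq t+1$ and $0\leq\ell\leq t-1$, the lower index $b=k-\ell\geq 2$ is nonnegative, so the identity applies and yields $\binom{k-\ell-c-1}{k-\ell}=(-1)^{k-\ell}\binom{c}{k-\ell}$. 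Substituting this into Proposition \ref{ppp} produces exactly the formula in (2).

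Part (1) is the same substitution with $d=nt-n+c+1$ and $k=t+1$. Here $nt-d+k-1=n-c+t-1$, giving the leading term $\binom{n-c+t-1}{t+1}$, while $tn-n-d+k-\ell-1=t-\ell-c-1$. Applying the same upper-negation identity with $b=t+1-\ell\geq 2$ gives $\binom{t-\ell-c-1}{t+1-\ell}=(-1)^{t+1-\ell}\binom{c+1}{t+1-\ell}$. The minus sign in front of the sum in Proposition \ref{ppp} combines with $(-1)^{t+1-\ell}=-(-1)^{t-\ell}$ to flip into a plus sign, yielding the expression $+\sum_{\ell=0}^{t-1}(-1)^{t-\ell}\binom{n+\ell-1}{\ell}\binom{c+1}{t+1-\ell}$ displayed in (1).

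The computation is essentially bookkeeping; the only point demanding care is the sign-consistent use of the upper-negation identity, together with the (easy) check that the lower indices $k-\ell$ and $t+1-\ell$ stay nonnegative over the stated ranges, so that the generalized binomial coefficients with negative upper index are correctly converted into ordinary ones. No deeper combinatorial identity (such as \eqref{magic} or \eqref{magic2}) is needed for this proposition; those were already absorbed in the proof of Proposition \ref{ppp}.
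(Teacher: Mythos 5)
Your proposal is correct and follows essentially the same route as the paper: substitute $d=nt-n+\lceil n/(t+1)\rceil+1$ (resp.\ $d=nt-n+\lceil n/(t+1)\rceil$) into Proposition \ref{ppp} and convert the negative-upper-index binomials via the upper-negation identity, which is exactly the paper's equation \eqref{e35}. The only difference is that you write out part (2) explicitly, whereas the paper dismisses it as ``similar to (1).''
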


\begin{proof}
% From Definition \ref{d2}, it is enough to show that $\qdepth(I_t)\leq nt-n+\left\lceil \frac{n}{t+1} \right\rceil$.
(1) Let $d:=nt-n+\left\lceil \frac{n}{t+1} \right\rceil+1$. From Proposition \ref{ppp} it follows that
\begin{equation}\label{e34}
\beta_{t+1}^d(I_t)= \binom{n-\left\lceil \frac{n}{t+1} \right\rceil+t-1}{t+1} - \sum_{\ell=0}^{t-1}
\binom{n+\ell-1}{\ell}\binom{-\left\lceil \frac{n}{t+1} \right\rceil-1+t-\ell}{t+1-\ell}.
\end{equation}
On the other hand, we have that
\begin{equation}\label{e35}
\binom{-\left\lceil \frac{n}{t+1} \right\rceil-1+t-\ell}{t+1-\ell}=
(-1)^{t-1-\ell}\binom{\left\lceil \frac{n}{t+1} \right\rceil+1}{t+1-\ell}.
\end{equation}
From \eqref{e34} and \eqref{e35} it follows that
$$\beta_{t+1}^d(I_t)= \binom{n-\left\lceil \frac{n}{t+1} \right\rceil+t-1}{t+1} - 
                    \sum_{\ell=0}^{t-1}(-1)^{t+1-\ell}\binom{n+\ell-1}{\ell}
										\binom{\left\lceil \frac{n}{t+1} \right\rceil+1}{t+1-\ell},$$
as required.

% Using the substitution $j=t-\ell$ in \eqref{e36}, we get the required formula.
(2) The proof is similar to the proof of (1).
\end{proof}

\begin{obs}\label{riem}\rm
In order to prove that $\qdepth(I_t)=nt-n+\left\lceil \frac{n}{t+1} \right\rceil$, it suffice to show that
\begin{equation}\label{crookoo}
\beta_{t+1}^{nt-n+\left\lceil \frac{n}{t+1} \right\rceil+1}(I_t)<0\text{ and }
\beta_{k}^{nt-n+\left\lceil \frac{n}{t+1} \right\rceil}(I_t)\geq 0\text{ for all }t+1\leq k\leq nt-n+\left\lceil \frac{n}{t+1} \right\rceil.
\end{equation}
Indeed, from $\beta_{t+1}^{nt-n+\left\lceil \frac{n}{t+1} \right\rceil+1}(I_t)$ it follows that $\qdepth(I_t)\leq nt-n+\left\lceil \frac{n}{t+1} \right\rceil$. Also, since $\beta_k^{nt-n+\left\lceil \frac{n}{t+1} \right\rceil}(I_t)=0$ for $k\leq t-1$ and
$\beta_t^{nt-n+\left\lceil \frac{n}{t+1} \right\rceil}(I_t)=\alpha_t(I_t)>0$, \eqref{crookoo} implies that 
$\qdepth(I_t)\geq nt-n+\left\lceil \frac{n}{t+1} \right\rceil$. 
Also, $\qdepth(I_t)=nt-n+\left\lceil \frac{n}{t+1} \right\rceil$ implies that 
$\qdepth(\mathbf m^t)=\left\lceil \frac{n}{t+1} \right\rceil$, since  $I_t\subset R_t$ is obtained from $\mathbf m^t\subset S$ via polarization and $R_t=S[x_{n+1},x_{n+2},\ldots,x_{nt}]$.
\end{obs}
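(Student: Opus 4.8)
Write $c:=\left\lceil\frac{n}{t+1}\right\rceil$ and $D:=nt-n+c$, so that the target value is $\qdepth(I_t)=D$. The remark bundles three claims, and my plan is to prove each in turn: that the second family of inequalities in \eqref{crookoo} forces $\qdepth(I_t)\ge D$, that the single inequality $\beta_{t+1}^{D+1}(I_t)<0$ forces $\qdepth(I_t)\le D$, and that the resulting equality descends to $\me^t$ through polarization.

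For the lower bound, the inequalities supplied by \eqref{crookoo} only cover the range $t+1\le k\le D$, so I must independently control the degrees $k\le t$. Here I would use that $I_t$ is generated in degree $t$, whence $\alpha_j(I_t)=0$ for $0\le j\le t-1$. Inserting this into the defining formula \eqref{betak} makes every summand vanish when $k\le t-1$, giving $\beta_k^D(I_t)=0$, while for $k=t$ only the top term survives, giving $\beta_t^D(I_t)=\alpha_t(I_t)=\binom{n+t-1}{t}>0$ (the number of polarized degree-$t$ generators). Combined with \eqref{crookoo}, this shows $\beta_k^D(I_t)\ge0$ for all $0\le k\le D$, so $D$ belongs to the set defining $\qdepth(I_t)$ and therefore $\qdepth(I_t)\ge D$.

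The delicate direction, and the step I expect to carry the real weight, is deducing $\qdepth(I_t)\le D$ from $\beta_{t+1}^{D+1}(I_t)<0$ alone: by itself this inequality only rules out $d=D+1$, whereas $\qdepth$ is a maximum, so I must exclude every $d\ge D+1$. The tool is the Pascal-type recursion $\beta_k^d(I_t)=\beta_k^{d-1}(I_t)-\beta_{k-1}^{d-1}(I_t)$, which I would obtain by substituting $\binom{d-j}{k-j}=\binom{d-1-j}{k-j}+\binom{d-1-j}{k-1-j}$ into \eqref{betak}. Rewriting it as $\beta_k^{d-1}(I_t)=\beta_k^{d}(I_t)+\beta_{k-1}^{d-1}(I_t)$ and inducting on $k$ (the base case being $\beta_0^{d-1}(I_t)=\alpha_0(I_t)=\beta_0^{d}(I_t)\ge0$) shows that whenever all $\beta_k^{d}(I_t)\ge0$ one also has all $\beta_k^{d-1}(I_t)\ge0$; hence the set $\{d:\beta_k^d(I_t)\ge0\text{ for }0\le k\le d\}$ is downward closed. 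Granting this monotonicity, $\beta_{t+1}^{D+1}(I_t)<0$ forbids not merely $D+1$ but every larger $d$, so $\qdepth(I_t)\le D$, and with the previous paragraph $\qdepth(I_t)=D$.

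Finally, the transfer to $\me^t$ is bookkeeping: since $I_t$ is the polarization of $\me^t$ in $R_t=S[x_{n+1},\dots,x_{nt}]$, the number of adjoined variables is $N=nt-n$, so Definition \ref{d2} gives $\qdepth(\me^t)=\qdepth(I_t)-N=D-(nt-n)=c=\left\lceil\frac{n}{t+1}\right\rceil$, which is the stated conclusion. Thus the only non-routine ingredient is the monotonicity of the predicate ``all $\beta_k^d\ge0$'' in $d$; the vanishing of the lower $\alpha_j(I_t)$ and the polarization shift are both immediate.
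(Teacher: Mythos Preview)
Your argument is correct and tracks the paper's own reasoning in the remark: vanishing of $\alpha_j(I_t)$ for $j\le t-1$ handles the low degrees, the hypothesis \eqref{crookoo} handles $k\ge t+1$, and polarization transfers the result to $\mathbf m^t$. The one place you go beyond the paper is in proving the downward-closure of $\{d:\beta_k^d\ge 0\text{ for all }k\}$ via the Pascal recursion $\beta_k^d=\beta_k^{d-1}-\beta_{k-1}^{d-1}$, which the paper uses implicitly when concluding $\qdepth(I_t)\le D$ from a single negative value at level $D+1$; this is a genuine (and correctly executed) addition.
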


\begin{teor}\label{teo}
We have that $\qdepth(\mathbf m^t)\leq \left\lceil \frac{n}{t+1} \right\rceil$.
\end{teor}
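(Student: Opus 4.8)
The plan is to reduce the inequality to a single sign computation, and then to evaluate the relevant $\beta$-number in closed form using the combinatorial identity \eqref{magic2}. Since $I_t\subset R_t$ is the polarization of $\mathbf m^t\subset S$ and $R_t=S[x_{n+1},\ldots,x_{nt}]$, Definition \ref{d2} gives $\qdepth(\mathbf m^t)=\qdepth(I_t)-(nt-n)$. Hence, by the first half of Remark \ref{riem}, for the upper bound alone it suffices to prove only the first inequality of \eqref{crookoo}, namely
$$\beta_{t+1}^{nt-n+\left\lceil \frac{n}{t+1} \right\rceil+1}(I_t)<0,$$
which yields $\qdepth(I_t)\leq nt-n+\left\lceil \frac{n}{t+1} \right\rceil$ and therefore $\qdepth(\mathbf m^t)\leq \left\lceil \frac{n}{t+1} \right\rceil$.

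Writing $c:=\left\lceil \frac{n}{t+1} \right\rceil$ for brevity, I would take as starting point the explicit formula of Proposition \ref{propou}(1),
$$\beta_{t+1}^{nt-n+c+1}(I_t)=\binom{n-c+t-1}{t+1}+\sum_{\ell=0}^{t-1}(-1)^{t-\ell}\binom{n+\ell-1}{\ell}\binom{c+1}{t+1-\ell}.$$
The key step is to recognize the alternating sum as a truncation of \eqref{magic2} applied with $k=t+1$ and $d=c+1$: the full sum $\sum_{\ell=0}^{t+1}(-1)^{t+1-\ell}\binom{n+\ell-1}{\ell}\binom{c+1}{t+1-\ell}$ equals $\binom{n-c+t-1}{t+1}$. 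Isolating the two missing terms $\ell=t$ and $\ell=t+1$ (which contribute $-(c+1)\binom{n+t-1}{t}$ and $\binom{n+t}{t+1}$ respectively) and substituting back, the leading binomial $\binom{n-c+t-1}{t+1}$ cancels exactly, collapsing the whole expression to the clean form
$$\beta_{t+1}^{nt-n+c+1}(I_t)=\binom{n+t}{t+1}-(c+1)\binom{n+t-1}{t}.$$

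Finally I would factor. Using $\binom{n+t}{t+1}=\frac{n+t}{t+1}\binom{n+t-1}{t}$, the right-hand side becomes $\binom{n+t-1}{t}\bigl(\frac{n+t}{t+1}-(c+1)\bigr)$. As $\binom{n+t-1}{t}>0$, negativity reduces to the elementary inequality $\frac{n+t}{t+1}<c+1$. Because $c=\left\lceil \frac{n}{t+1} \right\rceil$ satisfies $c(t+1)\geq n$, we get $n+t\leq c(t+1)+t<(c+1)(t+1)$, so the inequality is strict and the proof is complete.

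I expect the main obstacle to be the middle step: correctly matching the truncated alternating sum to \eqref{magic2} and tracking the signs of the two boundary terms so that the cancellation producing $\binom{n+t}{t+1}-(c+1)\binom{n+t-1}{t}$ goes through cleanly. Once that closed form is in hand, the sign analysis via the defining property of the ceiling is entirely routine.
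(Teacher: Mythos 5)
Your proof is correct and follows essentially the same route as the paper: reduce via Remark \ref{riem} to showing $\beta_{t+1}^{nt-n+\lceil n/(t+1)\rceil+1}(I_t)<0$, evaluate that quantity by completing the alternating sum in Proposition \ref{propou}(1) to the full identity \eqref{magic2}, and conclude from the elementary inequality $\frac{n+t}{t+1}<\lceil\frac{n}{t+1}\rceil+1$. Your closed form $\binom{n+t}{t+1}-\bigl(\lceil\frac{n}{t+1}\rceil+1\bigr)\binom{n+t-1}{t}$ is in fact the accurate one: the paper's \eqref{cico2} carries a spurious extra term $-\binom{n-\lceil n/(t+1)\rceil+t-1}{t}$ (check $n=2$, $t=1$, where the true value is $-1$), which is harmless for the sign conclusion since it only makes the expression more negative.
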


\begin{proof}
From \eqref{magic2}, it follows that
\begin{equation}\label{cicoo}
\sum_{\ell=0}^{t+1}(-1)^{t+1-\ell}\binom{n-\ell+1}{\ell}\binom{\left\lceil \frac{n}{t+1} \right\rceil+1}{t+1-\ell}=
\binom{n-\left\lceil \frac{n}{t+1} \right\rceil+t}{t+1}.
\end{equation}
From \eqref{cicoo} and Proposition \ref{propou}(1) we henceforth get
\begin{equation}\label{cico2}
\beta_{t+1}^{nt-n+\left\lceil \frac{n}{t+1} \right\rceil+1}(I_t)= -\binom{n-\left\lceil \frac{n}{t+1} \right\rceil+t-1}{t} -
\binom{n+t-1}{t}\left(\left\lceil \frac{n}{t+1} \right\rceil+1 \right) + \binom{n+t}{t+1}.
\end{equation}
On the other hand, we have that
\begin{equation}\label{cico3}
\binom{n+t}{t+1} - \binom{n+t-1}{t}\left(\left\lceil \frac{n}{t+1} \right\rceil+1 \right) = 
\binom{n+t-1}{t}\left(\frac{n-1}{t+1}-\left\lceil \frac{n}{t+1} \right\rceil \right) < 0.
\end{equation}
From \eqref{cico2} and \eqref{cico3} it follows that $$\beta_{t+1}^{nt-n+\left\lceil \frac{n}{t+1} \right\rceil+1}(I_t)<0,$$ and,
therefore, as in Remark \ref{riem}, it follows
that $\qdepth(\mathbf m^t)\leq \left\lceil \frac{n}{t+1} \right\rceil$, as required.
\end{proof}

\begin{cor}\label{cteo}
We have that $\qdepth(\mathbf m)= \left\lceil \frac{n}{2} \right\rceil$.
\end{cor}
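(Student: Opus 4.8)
The plan is to establish the equality by sandwiching $\qdepth(\mathbf m)$ between two matching bounds. The upper bound is immediate: specializing Theorem \ref{teo} to $t=1$ gives at once $\qdepth(\mathbf m)\leq \left\lceil \frac{n}{2} \right\rceil$, so no further work is needed on that side.

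For the reverse inequality I would invoke the known value of the Stanley depth of the maximal graded ideal. By \cite[Theorem 2.2]{biro} we have $\sdepth(\mathbf m)=\left\lceil \frac{n}{2} \right\rceil$, and Proposition \ref{p2} supplies the general inequality $\sdepth(\mathbf m)\leq \qdepth(\mathbf m)$. Chaining these yields
$$\left\lceil \frac{n}{2} \right\rceil = \sdepth(\mathbf m)\leq \qdepth(\mathbf m)\leq \left\lceil \frac{n}{2} \right\rceil,$$
which forces equality throughout and proves the corollary.

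As an alternative, entirely self-contained route, one could argue directly through Remark \ref{riem} with $t=1$: since $\mathbf m$ is already squarefree, polarization is trivial here, so $I_1=\mathbf m$, $R_1=S$ and $N=0$, whence $\qdepth(\mathbf m)=\qdepth(I_1)$. One then specializes the formula of Proposition \ref{ppp} to $t=1$ and $d=\left\lceil \frac{n}{2} \right\rceil$ and checks the sign conditions \eqref{crookoo} of Remark \ref{riem}, namely nonnegativity of the relevant $\beta_k^d(I_1)$ together with the strict negativity already furnished by Theorem \ref{teo}. This would reprove the lower bound without citing the Stanley-depth value, but the verification of the nonnegativity for all $k$ is somewhat more delicate than the two-line sandwich.

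There is no genuine obstacle in this corollary: it is a direct consequence of Theorem \ref{teo} combined with a previously established fact about the Stanley depth. The only points requiring care are citing the correct reference for $\sdepth(\mathbf m)$ and observing that in the case $t=1$ polarization introduces no new variables, so the quasi depth of the polarized ideal coincides with that of $\mathbf m$ itself.
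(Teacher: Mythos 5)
Your proposal is correct and follows essentially the same route as the paper: the upper bound from Theorem \ref{teo} with $t=1$, and the lower bound from $\sdepth(\mathbf m)=\left\lceil \frac{n}{2} \right\rceil$ (\cite[Theorem 2.2]{biro}) combined with the inequality $\sdepth\leq\qdepth$ (the paper cites Proposition \ref{p1} rather than \ref{p2}, which is immaterial since $\mathbf m$ is squarefree). The alternative direct verification you sketch is not needed.
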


\begin{proof}
From Theorem \ref{teo} it follows that $\qdepth(\mathbf m)\leq \left\lceil \frac{n}{2} \right\rceil$. 
On the other hand, from \eqref{p2} and \cite[Theorem 2.2]{biro}, it follows that
$\qdepth(\mathbf m)\geq \sdepth(\mathbf m)=\left\lceil \frac{n}{2} \right\rceil$. Hence, we are done.
\end{proof}

\begin{teor}\label{main}
With the above notations, we have that:
\begin{enumerate}
\item[(1)] $\qdepth(\mathbf m^t)=1$ for $t\geq n-1$.
\item[(2)] $\qdepth(\mathbf m^2)=\left\lceil \frac{n}{3} \right\rceil$.
\end{enumerate}
\end{teor}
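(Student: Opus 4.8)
The plan is to handle both statements through Remark \ref{riem}: in each case the upper bound is already supplied by Theorem \ref{teo}, so what remains is the nonnegativity half of \eqref{crookoo}, namely that $\beta_k^{d}(I_t)\geq 0$ for all $t+1\leq k\leq d$, where $d=nt-n+\left\lceil \frac{n}{t+1}\right\rceil$. For this I would substitute the relevant $d$ into Proposition \ref{ppp} (equivalently Proposition \ref{propou}(2)) and analyse the resulting alternating sum of binomials. Recall that the range $0\leq k\leq t$ needs no attention, since by Remark \ref{riem} those $\beta_k^{d}(I_t)$ are either $0$ or equal to $\alpha_t(I_t)>0$.

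For part (1), note first that $t\geq n-1$ forces $\left\lceil \frac{n}{t+1}\right\rceil=1$, so $d=nt-n+1$ and $\qdepth(\mathbf m^t)\leq 1$ by Theorem \ref{teo}. Substituting $d=nt-n+1$ into Proposition \ref{ppp} gives $\beta_k^{nt-n+1}(I_t)=\binom{n+k-2}{k}-\sum_{\ell=0}^{t-1}\binom{n+\ell-1}{\ell}\binom{k-\ell-2}{k-\ell}$. The key observation is that $\binom{k-\ell-2}{k-\ell}$ is nonzero only when $k-\ell\in\{0,1\}$, i.e. when $\ell\in\{k-1,k\}$; but for $k\geq t+1$ both of these exceed $t-1$, so the sum is empty and $\beta_k^{nt-n+1}(I_t)=\binom{n+k-2}{k}\geq 0$. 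By Remark \ref{riem} this yields $\qdepth(I_t)=nt-n+1$, hence $\qdepth(\mathbf m^t)=1$. (Alternatively, one may quote that $\sdepth(\mathbf m^t)=1$ for $t\geq n-1$ and invoke Proposition \ref{p2} to get $\qdepth(\mathbf m^t)\geq\sdepth(\mathbf m^t)=1$.)

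For part (2) I set $c:=\left\lceil \frac{n}{3}\right\rceil$ and $d:=n+c$, so that $n\in\{3c-2,3c-1,3c\}$, and use Proposition \ref{propou}(2) with $t=2$ in the form $\beta_k^{n+c}(I_2)=\binom{n-c+k-1}{k}+(-1)^k\left(n\binom{c}{k-1}-\binom{c}{k}\right)$ for $3\leq k\leq n+c$. When $k\geq c+2$ both $\binom{c}{k}$ and $\binom{c}{k-1}$ vanish and only the nonnegative term $\binom{n-c+k-1}{k}$ survives. When $k$ is even, the inequality $\binom{n-c+k-1}{k}\geq\binom{c}{k}$ (monotonicity in the upper index, valid since $n-c+k-1\geq c$ because $n\geq 3c-2$ and $k\geq 3$) together with $n\binom{c}{k-1}\geq 0$ gives $\beta_k^{n+c}(I_2)\geq 0$.

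The odd values of $k$ are the main obstacle, since there $\beta_k^{n+c}(I_2)=\binom{n-c+k-1}{k}+\binom{c}{k}-n\binom{c}{k-1}$ and the negative term $n\binom{c}{k-1}$ is genuinely competitive; in fact the quantity vanishes exactly at $k=3$, $n=3c-2$. To deal with it I would first reduce to the smallest admissible $n$: the forward difference in $n$ (with $c$ fixed) equals $\binom{n-c+k-1}{k-1}-\binom{c}{k-1}\geq 0$, so it suffices to verify the inequality at $n=3c-2$, where it reads $\binom{2c+k-3}{k}+\binom{c}{k}\geq(3c-2)\binom{c}{k-1}$. At $k=3$ this holds with equality (both sides collapse to the same cubic in $c$), and for odd $k\geq 5$ there is ample slack: writing $\binom{2c+k-3}{k}/\binom{c}{k-1}$ as $\frac{1}{k}$ times a product of $k$ ascending factors over $k-1$ ascending factors and pairing them, each of the $k-1$ ratios $\frac{2c-2+i}{c-k+2+i}$ $(0\leq i\leq k-2)$ is at least $2$, whence $\binom{2c+k-3}{k}\geq \frac{2^{k-1}(2c+k-3)}{k}\binom{c}{k-1}\geq(3c-2)\binom{c}{k-1}$. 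The genuinely delicate point is the tightness at $k=3$ (where the bound is an equality precisely when $n\equiv 1\pmod 3$); the monotonicity reduction in $n$ is what lets one avoid treating the three residues of $n$ modulo $3$ by hand.
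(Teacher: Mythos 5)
Your proposal is correct, and part (1) is essentially identical to the paper's argument: substituting $d=nt-n+1$ makes the subtracted sum in Proposition \ref{ppp} vanish for $k\geq t+1$, leaving $\binom{n+k-2}{k}\geq 0$, which is exactly the paper's equation \eqref{cucurigu}. For part (2) you start from the same formula \eqref{e312} and treat the easy cases ($k$ large, $k$ even) in the same spirit, but you handle the critical case of odd $k$ by a genuinely different route. The paper fixes $k=3$ as a base case, splits into the three residues $n=3p,3p+1,3p+2$, and then runs an induction $k\mapsto k+2$ whose inductive step is only asserted to follow ``by straightforward computations.'' You instead observe that, for fixed $c=\left\lceil \frac{n}{3}\right\rceil$ and $k$, the expression $\binom{n-c+k-1}{k}+\binom{c}{k}-n\binom{c}{k-1}$ has nonnegative forward difference in $n$, so it suffices to check $n=3c-2$; there $k=3$ gives exact equality (the same tight case the paper meets as $n=3p+1$), and for odd $k\geq 5$ a termwise ratio bound gives $\binom{2c+k-3}{k}\geq \frac{2^{k-1}(2c+k-3)}{k}\binom{c}{k-1}\geq (3c-2)\binom{c}{k-1}$ with plenty of room. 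This buys you two things: you avoid the three-way case split on $n\bmod 3$ (only the extremal residue matters), and you replace the unproved induction step with a closed-form estimate; it also uniformly covers $k=c$ and $k=c+1$, which the paper treats separately (and where its displayed value $\binom{n-1}{\lceil n/3\rceil}+(-1)^{\lceil n/3\rceil}$ for $k=\lceil n/3\rceil$ does not literally match \eqref{e312}). One small caveat: your pairing inequality $\frac{2c-2+i}{c-k+2+i}\geq 2$ holds only for $i\leq 2k-6$, so it is valid for all $0\leq i\leq k-2$ precisely when $k\geq 4$; since you invoke it only for $k\geq 5$ and treat $k=3$ as the equality case, the argument is sound, but this restriction is worth stating explicitly.
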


\begin{proof}
First, note that, using Theorem \ref{teo}, it suffice to show the $\geq$ inequality. 

(1) Since $t\geq n-1$, that is $\left\lceil \frac{n}{t+1} \right\rceil=1$, from Proposition \ref{propou}(2) it follows that
\begin{equation}\label{cucurigu}
\beta_{k}^{nt-n+1}(I_t)=\binom{n+k-2}{k},\text{ for all }k\geq t+1.
\end{equation}
From \eqref{cucurigu} and the fact that $\beta_k^{nt-n+1}(I_t)=0$ for $k\leq t-1$
and $\beta_t^{nt-n+1}(I_t)=\alpha_t(I_t)>0$, it follows that $\qdepth(I_t)\geq nt-n+1$. Therefore, $\qdepth(I_t)=nt-n$ and, as in
Remark \ref{riem}, this implies $\qdepth(\mathbf m^t)\geq 1$, as required.

(2) Since $t=2$, from Proposition \ref{propou}(2) and the fact that 
$n-\left\lceil \frac{n}{3} \right\rceil=\left\lfloor \frac{2n}{3} \right\rfloor$, it follows that
\begin{equation}\label{e312}
\beta_k^{n+\left\lceil \frac{n}{3} \right\rceil}(I_2)=\binom{\left\lfloor \frac{2n}{3} \right\rfloor+k-1}{k}-
(-1)^k\binom{\left\lceil \frac{n}{3} \right\rceil}{k}+(-1)^{k}n\binom{\left\lceil \frac{n}{3} \right\rceil}{k-1}
\end{equation}
If $k\geq \left\lceil \frac{n}{3} \right\rceil+1$ then, from \eqref{e312}, it follows that
$$\beta_k^{n+\left\lceil \frac{n}{3} \right\rceil}(I_2)=\binom{\left\lfloor \frac{2n}{3} \right\rfloor+k-1}{k} > 0.$$
Also, if $k =\left\lceil \frac{n}{3} \right\rceil$ then, from \eqref{e312} and the fact that $n\geq 2$, it follows that
$$\beta_k^{n+\left\lceil \frac{n}{3} \right\rceil}(I_2)=\binom{n-1}{\left\lceil \frac{n}{3} \right\rceil}+
(-1)^{\left\lceil \frac{n}{3} \right\rceil}\geq 0.$$
Now, assume that $k\leq \left\lceil \frac{n}{3} \right\rceil-1$. From \eqref{e312} we get
\begin{equation}\label{e313}
 \beta_k^{n+\left\lceil \frac{n}{3} \right\rceil}(I_2)=\binom{\left\lfloor \frac{2n}{3} \right\rfloor+k-1}{k}
+ (-1)^k\cdot \frac{nk-\left\lceil \frac{n}{3} \right\rceil+k-1}{\left\lceil \frac{n}{3} \right\rceil - k +1}\binom{\left\lceil \frac{n}{3} \right\rceil}{k}.
\end{equation}
If $k$ is even then, from \eqref{e313} it follows that $\beta_k^{n+\left\lceil \frac{n}{3} \right\rceil}(I_2)>0$, hence, the
only case needed to be considered is $k$ is odd and $3\leq k\leq \left\lceil \frac{n}{3} \right\rceil-1$. If $n\leq 9$ then there is nothing to prove, so we can assume that $n\geq 10$. In order to show that $\beta_k^{n+\left\lceil \frac{n}{3} \right\rceil}(I_2)\geq 0$, by
\eqref{e313}, it suffice to prove that \small
\begin{equation}\label{e314}
\left(\left\lfloor \frac{2n}{3} \right\rfloor+k-1\right)\left(\left\lfloor \frac{2n}{3} \right\rfloor+k-2\right)\cdots 
\left\lfloor \frac{2n}{3} \right\rfloor \geq \left(nk-\left\lceil \frac{n}{3} \right\rceil+k-1\right) \left\lceil \frac{n}{3} \right\rceil 
%\left(\left\lceil \frac{n}{3} \right\rceil-1\right) 
\cdots \left(\left\lceil \frac{n}{3} \right\rceil-k+2\right).
\end{equation}
\normalsize
In order to prove \eqref{e314}, we use induction on $k\geq 3$. If $k=3$, then \eqref{e314} became
\begin{equation}\label{e315}
\left(\left\lfloor \frac{2n}{3} \right\rfloor+2\right)\left(\left\lfloor \frac{2n}{3} \right\rfloor+1\right)\left\lfloor \frac{2n}{3} \right\rfloor \geq  \left(3n+2-\left\lceil \frac{n}{3} \right\rceil\right) \left\lceil \frac{n}{3} \right\rceil 
\left( \left\lceil \frac{n}{3} \right\rceil-1 \right).
\end{equation}
We consider three cases:
\begin{enumerate}
\item[(i)] $n=3p$. Equation \eqref{e315} is equivalent to 
$$(2p+2)(2p+1)2p \geq (8p+2)p(p-1) \Leftrightarrow 8p^3 + 12p^2 + 4p \geq 8p^3 +10p^2 - 2p,$$
which is obviously true.
\item[(ii)] $n=3p+1$. Equation \eqref{e315} is equivalent to 
$$(2p+2)(2p+1)2p \geq (8p+4)(p+1)p \Leftrightarrow 8p^3 + 12p^2 + 4p \geq 8p^3 +12p^2 4p,$$
which is also true.
\item[(iii)] $n=3p+2$. Equation \eqref{e315} is equivalent to
$$ (2p+3)(2p+2)(2p+1) \geq (8p+6)(p+1)p \Leftrightarrow 8p^3 + 24p^2 + 22p + 6 \geq 8p^3 + 14p^2 + 6p, $$
which is again true.
\end{enumerate}
Hence, the initial step of the induction is done. In order to prove the induction step, assume \eqref{e314} holds for $k$.
We have to show that it holds also for $k+2$. In order to do that, it suffice to prove that %\small
\begin{align*}
& \left(\left\lfloor \frac{2n}{3} \right\rfloor+k+1\right)\left(\left\lfloor \frac{2n}{3} \right\rfloor+k\right)
\left(nk-\left\lceil \frac{n}{3} \right\rceil+k-1\right) \geq \\
& \geq \left( \left\lceil \frac{n}{3} \right\rceil-k+1 \right)
\left( \left\lceil \frac{n}{3} \right\rceil-k \right)\left(nk+2n-\left\lceil \frac{n}{3} \right\rceil+k+1\right).
\end{align*}
%\normalsize
This can be proved, by straightforward computations, in a similar manner as \eqref{e315}. 

Now, from all the above considerations, it follows that 
$$\beta_k^{n+\left\lceil \frac{n}{3} \right\rceil}(I_2)\geq 0\text{ for all }0\leq k\leq n+\left\lceil \frac{n}{3} \right\rceil,$$ and, therefore, $\qdepth(I_2)\geq n+\left\lceil \frac{n}{3} \right\rceil$.
Thus, $\qdepth(\mathbf m^2)\geq \left\lceil \frac{n}{3} \right\rceil$, as required.
\end{proof}

%We end our paper with the following result:

\begin{prop}\label{eqi}
The following are equivalent:
\begin{enumerate}
\item[(1)] $\qdepth(\mathbf m^t)=\left\lceil \frac{n}{t+1} \right\rceil$.
\item[(2)] $\sum\limits_{j=0}^{k-t} (-1)^j \binom{n+k-j-1}{k-j}\binom{\left\lceil \frac{n}{t+1} \right\rceil}{j}\geq 0$
for all $t+1\leq k\leq nt-n+\left\lceil \frac{n}{t+1} \right\rceil$.
\item[(3)] $\sum\limits_{j=0}^k (-1)^j \binom{k+t}{j}\binom{n-j}{m-j}\geq 0$ for all $t,k,m,n\geq 1$ such that 
$$m(t+1)+k-1 \leq n \leq (m+1)(t+1)+k-2\text{ and }1\leq k\leq nt-n-t+m.$$
\end{enumerate}
\end{prop}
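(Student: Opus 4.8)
The plan is to prove the two equivalences $(1)\Leftrightarrow(2)$ and $(2)\Leftrightarrow(3)$ in turn. The first is immediate from the apparatus already in place, while the second is a purely combinatorial reformulation in which the ceiling $\lceil n/(t+1)\rceil$ is promoted to a free parameter.

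For $(1)\Leftrightarrow(2)$ I would argue through Remark \ref{riem}. Since Theorem \ref{teo} already furnishes $\beta_{t+1}^{nt-n+\lceil n/(t+1)\rceil+1}(I_t)<0$ with no extra hypothesis, the equality $\qdepth(I_t)=nt-n+\lceil n/(t+1)\rceil$, which is $(1)$ after passing from $I_t$ back to $\mathbf m^t$, amounts exactly to the nonnegativity of $\beta_k^{nt-n+\lceil n/(t+1)\rceil}(I_t)$ for $t+1\le k\le nt-n+\lceil n/(t+1)\rceil$; the remaining cases are automatic, as these $\beta$ vanish for $k\le t-1$ and equal $\alpha_t(I_t)>0$ for $k=t$. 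Writing $c:=\lceil n/(t+1)\rceil$, I would then transform Proposition \ref{propou}(2): identity \eqref{magic2} exhibits the leading binomial $\binom{n-c+k-1}{k}$ as the complete alternating sum $\sum_{\ell=0}^{k}(-1)^{k-\ell}\binom{n+\ell-1}{\ell}\binom{c}{k-\ell}$, and its terms with $0\le\ell\le t-1$ cancel the sum subtracted in Proposition \ref{propou}(2). This leaves
\[
\beta_k^{nt-n+c}(I_t)=\sum_{\ell=t}^{k}(-1)^{k-\ell}\binom{n+\ell-1}{\ell}\binom{c}{k-\ell}=\sum_{j=0}^{k-t}(-1)^{j}\binom{n+k-j-1}{k-j}\binom{c}{j}
\]
after the substitution $j=k-\ell$, and nonnegativity of the right-hand side over the stated range is precisely $(2)$.

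For $(2)\Leftrightarrow(3)$ the guiding idea is that, for fixed $t$, the value $\lceil n/(t+1)\rceil$ is constant on blocks of $t+1$ consecutive $n$, so it may be replaced by an independent variable $m$. Concretely, the double inequality $m(t+1)+k-1\le n\le(m+1)(t+1)+k-2$ in $(3)$ is equivalent to $m=\lfloor(n-k+1)/(t+1)\rfloor$, and via $\lceil a/(t+1)\rceil=\lfloor(a+t)/(t+1)\rfloor$ this is a ceiling condition $m=\lceil(n-k-t+1)/(t+1)\rceil$ of the same shape as the one defining $c$ in $(2)$; correspondingly the range $t+1\le k\le nt-n+c$ of $(2)$ should become $1\le k\le nt-n-t+m$. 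To pass between the two binomial sums I would use Chu--Vandermonde in the generating-function guise $\binom{n+k-j-1}{k-j}=[x^{k-j}](1-x)^{-n}$, rewriting the truncated alternating sum of $(2)$ and reading off the dual presentation $\sum_{j}(-1)^j\binom{k+t}{j}\binom{n-j}{m-j}$ of $(3)$.

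The step I expect to be the real obstacle is this passage between the two sums. They cannot be matched summand by summand: already for $t=1$ the value $\beta_2^{3}(\mathbf m)=3$ occurring in $(2)$ for $n=6$ is not realized by any instance of $(3)$ with $t=1$, so no naive termwise identity links the two families. I would therefore prove $(2)\Leftrightarrow(3)$ at the level of the whole systems of nonnegativity constraints, showing that under the dictionary above the inequalities of $(3)$ and the numbers $\beta_k^{nt-n+c}(I_t)$ impose equivalent conditions, with the block structure and the boundary terms of each window controlled by \eqref{magic} and \eqref{magic2}. Verifying that the two index ranges correspond exactly, and that the equivalence runs in both directions rather than as a single implication, is where the combinatorial bookkeeping will be most delicate.
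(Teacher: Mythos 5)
Your treatment of $(1)\Leftrightarrow(2)$ is correct and is exactly the paper's argument: combine Theorem \ref{teo} with Remark \ref{riem}, use \eqref{magic2} with $d=\left\lceil \frac{n}{t+1}\right\rceil$ to absorb the leading binomial of Proposition \ref{propou}(2) into the alternating sum, and reindex by $j=k-\ell$. No issue there.

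The gap is in $(2)\Leftrightarrow(3)$, precisely at the step you flag as ``the real obstacle'' and then leave unresolved. You assert that the two sums ``cannot be matched summand by summand'' and propose instead to compare the two systems of inequalities globally via \eqref{magic} and \eqref{magic2}; no such argument is carried out, and it is not clear how it would go. In fact the summands \emph{do} match term by term -- not with equal values, but with a common positive ratio independent of $j$, which is all that nonnegativity requires. Setting $n'=n+k-1$ and $k'=k-t$ (your dictionary is the right one), the trinomial-revision computation
\begin{equation*}
\frac{\binom{n'-j}{k'+t-j}\binom{m}{j}}{\binom{k'+t}{j}\binom{n'-j}{m-j}}
=\frac{m!\,(n'-m)!}{(k'+t)!\,(n'-k'-t)!}
=\frac{\binom{n'}{k'+t}}{\binom{n'}{m}}
\end{equation*}
shows that the sum in $(2)$ equals $\frac{\binom{n'}{k'+t}}{\binom{n'}{m}}$ times the sum in $(3)$ (after renaming $n',k'$ as $n,k$), so each inequality in one family is literally a positive multiple of the corresponding inequality in the other, and the equivalence holds in both directions with no further bookkeeping. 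This is the paper's proof. Your numerical example ($\beta_2^3=3$ versus the value $5$ on the other side, for $n=6$, $t=1$) is consistent with this -- the ratio there is $\binom{7}{2}/\binom{7}{3}=3/5$ -- so it does not rule out a termwise identity, only an equality of values; drawing the stronger conclusion is what steered you away from the short argument. As written, your proof of $(2)\Leftrightarrow(3)$ is a plan rather than a proof, and the missing identity above is the entire content of that equivalence.
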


\begin{proof}
$(1)\Leftrightarrow(2)$. Note that, according to Theorem \ref{teo}, we have that $\qdepth(\mathbf m^t)\leq \left\lceil \frac{n}{t+1} \right\rceil$.

From Proposition \ref{propou}(2) and \eqref{magic2}, using the substitution $j=\ell-t$, it follows that
$$\beta_{k}^{nt-n+\left\lceil \frac{n}{t+1} \right\rceil}(I_t)=
\sum\limits_{\ell=t}^{k}(-1)^{k-\ell} \binom{n+\ell-1}{\ell}\binom{\left\lceil \frac{n}{t+1} \right\rceil}{k-\ell}
=\sum\limits_{j=0}^{k-t} (-1)^j \binom{n+k-j-1}{k-j}\binom{\left\lceil \frac{n}{t+1} \right\rceil}{j},$$
for all $t+1\leq k\leq nt-n+\left\lceil \frac{n}{t+1} \right\rceil$. Hence, the equivalence follows as in Remark \ref{riem}.

$(2)\Rightarrow(3)$. It is clear that $m=\left\lceil \frac{n}{t+1} \right\rceil$, if and only if
\begin{equation}\label{toto1}
m(t+1)\leq n \leq (m+1)(t+1)-1.
\end{equation}
Now, let $n'=n+k-1$, $k'=k-t$. From (2) it follows that
\begin{equation}\label{toto2}
\sum\limits_{j=0}^{k-t} (-1)^j \binom{n+k-j-1}{k-j}\binom{\left\lceil \frac{n}{t+1} \right\rceil}{j}=
\sum\limits_{j=0}^{k'}(-1)^j\binom{n'-j}{k'+t-j}\binom{m}{j}.
\end{equation}
On the other hand, we have that
$$\binom{n'-j}{k'+t-j}\binom{m}{j} = \frac{(n'-j)!m!}{(k'+t-j)!(n'-k-t)!j!(m-j)!} = \frac{(n'-m)!m!}{(k'+t)!(n'-k'-t)!} \times$$
\begin{equation}\label{toto3}
\times \frac{(n'-j)!(k'+t)!}{(n'-m)!(m-j)!(k'+t-j)!j!}
= \frac{\binom{n'}{k'+t}}{\binom{n'}{n-m}}\cdot \binom{k'+t}{j}\binom{n'-j}{n'-m}.
\end{equation}
From \eqref{toto1}, \eqref{toto2} and \eqref{toto3}, be renaming $n'$ with $n$ and $k'$ with $k$, we get the required conclusion.
$(3)\Rightarrow(2)$. The proof is similar.
\end{proof}

For $n,m,k,t\geq 1$, we denote 
\begin{equation}\label{bnmtk}
b(n,m,t,k):=\sum\limits_{j=0}^k (-1)^j \binom{k+t}{j}\binom{n-j}{m-j}.
\end{equation}

\begin{cor}\label{crit}
Let $n,t\geq 1$ and $m=\left\lceil \frac{n}{t+1} \right\rceil$ such that
$$b(n+k-1,m,t,k) \geq 0\text{ for all }1\leq k\leq nt-n-t+m.  $$
Then $\qdepth(\mathbf m^t)=m$.
\end{cor}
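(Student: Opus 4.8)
My plan is to read Corollary \ref{crit} as nothing more than the sufficiency half of Proposition \ref{eqi}, pinned to a single pair $(n,t)$. First I would invoke Theorem \ref{teo} to get the upper bound $\qdepth(\mathbf m^t)\leq m$, so the whole task collapses to the reverse inequality $\qdepth(\mathbf m^t)\geq m$. Passing to the polarization $I_t\subset R_t$ and using $\qdepth(\mathbf m^t)=\qdepth(I_t)-(nt-n)$, this is the same as $\qdepth(I_t)\geq nt-n+m$. By Remark \ref{riem}, together with the facts that $\beta_k^{nt-n+m}(I_t)=0$ for $k\leq t-1$ and $\beta_t^{nt-n+m}(I_t)=\alpha_t(I_t)>0$, it is enough to prove
$$\beta_k^{nt-n+m}(I_t)\geq 0\quad\text{for all }t+1\leq k\leq nt-n+m,$$
which is precisely condition (2) of Proposition \ref{eqi}.

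The second step is to rewrite each $\beta_k^{nt-n+m}(I_t)$ in terms of the quantity $b$ from \eqref{bnmtk}, reusing the computation already performed in the proof of Proposition \ref{eqi}. There, via \eqref{magic2} and the binomial rearrangement \eqref{toto3}, one shows that $\beta_k^{nt-n+m}(I_t)$ equals a strictly positive binomial multiple of the corresponding value of $b$, the prefactor being positive because the binomials involved are nonzero for $0<m\leq n\leq n+k-1$. Consequently $\beta_k^{nt-n+m}(I_t)\geq 0$ holds for every admissible $k$ if and only if the associated $b$-values are nonnegative. Matching the two index conventions, where the shift $k\mapsto k-t$ carries the range $t+1\leq k\leq nt-n+m$ onto $1\leq k\leq nt-n-t+m$, these are exactly the numbers $b(n+k-1,m,t,k)$ supplied by the hypothesis.

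The last step is a bookkeeping check that the tuples fed into $b$ really fall under the regime covered by Proposition \ref{eqi}(3): using the characterization of $m=\left\lceil\frac{n}{t+1}\right\rceil$ one verifies the relevant constraint $m(t+1)+k-1\leq\,\cdot\,\leq(m+1)(t+1)+k-2$ by the one-line estimate $n\leq m(t+1)\leq n+t$. Granting the hypothesis, every $\beta_k^{nt-n+m}(I_t)$ is then nonnegative, so $\qdepth(I_t)\geq nt-n+m$, which together with Theorem \ref{teo} forces $\qdepth(\mathbf m^t)=m$, as claimed. I do not anticipate any genuine obstacle here: all of the combinatorial substance is already packaged in Lemma \ref{l2}, Proposition \ref{propou} and Proposition \ref{eqi}, and the only point demanding care is the coordinated handling of the two index shifts (the reindexing $k\mapsto k-t$ and the substitution $n\mapsto n+k-1$) together with the ceiling inequality above.
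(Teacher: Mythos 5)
Your route is the paper's own: the published proof of Corollary \ref{crit} is literally ``It follows from Remark \ref{riem} and the proof of Proposition \ref{eqi}'', and your steps (upper bound from Theorem \ref{teo}, reduction via Remark \ref{riem} to $\beta_k^{nt-n+m}(I_t)\geq 0$ for $t+1\leq k\leq nt-n+m$, and the identification of each such $\beta$ with a positive multiple of a value of $b$) are exactly what that reference unpacks to. Those first three steps are sound.

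The problem sits precisely in the step you flag as ``the only point demanding care'' and then assert rather than verify. Carrying out the substitutions $n'=n+k-1$, $k'=k-t$ and the rearrangement \eqref{toto3} from the proof of Proposition \ref{eqi} gives
\begin{equation*}
\beta_k^{nt-n+m}(I_t)=\frac{\binom{n'}{k'+t}}{\binom{n'}{m}}\,b(n',m,t,k')
=\frac{\binom{n+k-1}{k}}{\binom{n+k-1}{m}}\,b(n+k-1,m,t,k-t),
\end{equation*}
so, after the reindexing $k\mapsto k'=k-t$, the quantities you must show nonnegative are $b(n+k'+t-1,m,t,k')$ for $1\leq k'\leq nt-n-t+m$ --- \emph{not} $b(n+k'-1,m,t,k')$ as the hypothesis supplies. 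The two families differ by a shift of $t$ in the first argument, and there is no evident implication between them. Concretely, for $t=1$, $n=3$, $m=2$ the stated hypothesis requires $b(3,2,1,1)=3-4=-1\geq 0$, which is false, whereas the quantity actually controlling $\beta_2^{2}(I_1)$ is $b(4,2,1,1)=6-6=0$; so the hypothesis as written is not the condition your argument needs, and it even fails in a case where the conclusion holds. This off-by-$t$ slip originates in the paper's own statement (it is also what makes Lemma \ref{lem2} and Lemma \ref{lem3} applicable in the proof of Theorem \ref{teo3} only after replacing $n+k-1$ by $n+k+t-1$, given the correct characterization $(m-1)(t+1)+1\leq n\leq m(t+1)$ of $m=\left\lceil\frac{n}{t+1}\right\rceil$), but declaring that the two index conventions match does not close it: you must either carry out the matching explicitly (it fails) or prove the corrected statement with hypothesis $b(n+k+t-1,m,t,k)\geq 0$.
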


\begin{proof}
It follows from Remark \ref{riem} and the proof of Proposition \ref{eqi}.
\end{proof}

\begin{lema}\label{lem1}
We have that $$b(n,m,t,k)=\binom{n-t-k}{m},\text{ for all }1\leq m\leq k.$$
\end{lema}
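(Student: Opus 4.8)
The plan is to reduce the sum defining $b(n,m,t,k)$ in \eqref{bnmtk} to the already-available identity \eqref{magic2}. First I would use the hypothesis $1\le m\le k$ to truncate the summation. Since $\binom{n-j}{m-j}=0$ whenever $j>m$, and since $m\le k$, every term with $m<j\le k$ vanishes, so
\[
b(n,m,t,k)=\sum_{j=0}^{m}(-1)^j\binom{k+t}{j}\binom{n-j}{m-j}.
\]
This is precisely where the hypothesis $m\le k$ is used, and it is essential: if one had $m>k$, the original sum would omit the nonzero terms $j=k+1,\dots,m$ and the claimed closed form would fail.

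Next I would reindex by $\ell=m-j$. Writing $(-1)^j=(-1)^{m-\ell}$, $\binom{k+t}{j}=\binom{k+t}{m-\ell}$, and $\binom{n-j}{m-j}=\binom{n-m+\ell}{\ell}$, the sum becomes
\[
b(n,m,t,k)=\sum_{\ell=0}^{m}(-1)^{m-\ell}\binom{n-m+\ell}{\ell}\binom{k+t}{m-\ell}.
\]
Rewriting $\binom{n-m+\ell}{\ell}=\binom{(n-m+1)+\ell-1}{\ell}$, this is exactly the left-hand side of \eqref{magic2} with outer index $m$, with the parameter ``$n$'' of \eqref{magic2} replaced by $n-m+1$ and the parameter ``$d$'' replaced by $k+t$. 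Applying \eqref{magic2} then yields
\[
b(n,m,t,k)=\binom{(n-m+1)-(k+t)+m-1}{m}=\binom{n-t-k}{m},
\]
which is the asserted formula.

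The only delicate point is the bookkeeping of the parameter substitution: one must verify that the sign $(-1)^{m-\ell}$, the upper binomial $\binom{(n-m+1)+\ell-1}{\ell}$, and the lower binomial $\binom{k+t}{m-\ell}$ all line up with \eqref{magic2}, and that the right-hand side simplifies as claimed; one should also check that the parameters fed into \eqref{magic2} are admissible, which holds since the outer index is $m\ge 1$, ``$d$''$=k+t\ge 0$, and ``$n$''$=n-m+1\ge 0$ in the range relevant to the application, where $m=\left\lceil \frac{n}{t+1}\right\rceil\le n$. As an independent sanity check I would instead extract the coefficient of $x^m$ from the generating-function identity $(1+x)^n\bigl(1-\tfrac{x}{1+x}\bigr)^{k+t}=(1+x)^{n-t-k}$, which reproduces $\binom{n-t-k}{m}$; here the extension of the summation to $0\le j\le k+t$ is harmless because the extra terms again vanish, the same mechanism as in the truncation step above.
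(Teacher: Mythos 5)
Your proof is correct and takes essentially the same route as the paper's: both arguments truncate the sum at $j=m$ using the hypothesis $m\le k$ and then invoke one of the paper's two stated Chu--Vandermonde consequences. The paper applies \eqref{magic} directly and then negates the upper index to get $(-1)^m\binom{k+t-n+m-1}{m}=\binom{n-t-k}{m}$, whereas you reindex by $\ell=m-j$ and apply \eqref{magic2}; since \eqref{magic2} is just \eqref{magic} with that negation built in, the difference is purely cosmetic.
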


\begin{proof}
Since $m\leq k$, according to \eqref{magic}, we have that
\begin{align*}
& b(n,m,t,k)=(-1)^m\sum_{j=0}^m (-1)^{m-j} \binom{k+t}{k}\binom{n-j}{m-j} = \\
& = (-1)^m\binom{k+t-n+m-1}{m}= \binom{n-t-k}{m},
\end{align*}
as required.
\end{proof}

Let $n,m,t,k\geq 1$ and $0\leq j\leq k$, such that $m\geq k+1$. We denote
$$f(n,m,t,k,j):=\binom{k+t}{j}\binom{n-j}{m-j}.$$
By straightforward computations, we get:
\begin{equation}\label{e19}
\frac{f(n,m,t,k,j)}{f(n,m,t,k,j+1)}=\frac{(n-k+j+1)(j+1)}{(m-k+j+1)(k+t-j)}.
\end{equation}
From \eqref{e19}, it follows that
\begin{equation}\label{e20}
f(n,m,t,k,j) \geq f(n,m,t,k,j+1)\text{ if and only if }n\geq m+k+t-2j-1+\frac{(m-k)(k+t+1)}{j+1}.
\end{equation}
Since the function $\varphi(j)=m+k+t-2j-1+\frac{(m-k)(k+t+1)}{j+1}$ is decreasing, from \eqref{e20}
it follows that for $n\geq \varphi(0)$ we have that $f(n,m,t,k,j)\geq f(n,m,t,k,j+1)$ for all $0\leq j\leq k-1$.
This allows us to prove the following:

\begin{lema}\label{lem2}
Let $n,m,k,t\geq 1$ such that $n\geq m(t+1)+k-1$. Then:
\begin{enumerate}
\item[(1)] $b(n,m,t,1)\geq 0$.
\item[(2)] If $m\leq k+t$ then $b(n,m,t,k)\geq 0$.
\end{enumerate}
\end{lema}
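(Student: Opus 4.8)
The plan is to prove the two parts separately: part (1) by a one-line factorization, and part (2) by a Pascal-type recursion plus an induction that collapses everything onto a single boundary value, which is where the real difficulty lies.

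For part (1), expand
\begin{equation*}
b(n,m,t,1)=\binom{n}{m}-(t+1)\binom{n-1}{m-1}=\frac{n-m(t+1)}{m}\binom{n-1}{m-1}.
\end{equation*}
Since $k=1$, the hypothesis $n\ge m(t+1)+k-1$ reads $n\ge m(t+1)$, so both factors are nonnegative and $b(n,m,t,1)\ge 0$. (This is exactly the single inequality $f(n,m,t,1,0)\ge f(n,m,t,1,1)$ produced by \eqref{e20}.)

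For part (2) I would first record the recursion coming from $\binom{n-j}{m-j}=\binom{n-1-j}{m-j}+\binom{n-1-j}{m-1-j}$, namely
\begin{equation*}
b(n,m,t,k)=b(n-1,m,t,k)+b(n-1,m-1,t,k).
\end{equation*}
If $m\le k$, Lemma \ref{lem1} gives $b(n,m,t,k)=\binom{n-t-k}{m}$, and $n-t-k\ge(m-1)(t+1)\ge 0$, so $b\ge 0$. For $k+1\le m\le k+t$ I would run a double induction: an outer induction on $m$ (base $m\le k$ above) and, for fixed $m$, an inner induction on $n\ge m(t+1)+k-1$. In the inner step $n>m(t+1)+k-1$ the recursion expresses $b(n,m,t,k)$ as $b(n-1,m,t,k)$, nonnegative by the inner induction since $n-1$ is still admissible, plus $b(n-1,m-1,t,k)$, nonnegative by the outer induction (or by the case $m\le k$) because $n-1\ge(m-1)(t+1)+k-1$ and $m-1\le k+t$. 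This reduces part (2) to the single inner base case $n=m(t+1)+k-1$.

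The main obstacle is thus the boundary value $b(m(t+1)+k-1,m,t,k)\ge 0$ for $k+1\le m\le k+t$; here the recursion is useless, since lowering $n$ exits the admissible range and $b$ genuinely can be negative just below it (already $b(n,m,t,1)<0$ for $m\le n<m(t+1)$). I would attack it through the finite form
\begin{equation*}
b(n,m,t,k)=\binom{n-k-t}{m}-\sum_{j=k+1}^{m}(-1)^{j}\binom{k+t}{j}\binom{n-j}{m-j},
\end{equation*}
whose correcting tail, thanks to $m\le k+t$, has only $m-k\le t$ terms and whose leading term at $n=m(t+1)+k-1$ is $\binom{(m-1)(t+1)}{m}$. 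When $m=k+1$ this collapses to $b=\binom{k(t+1)}{k+1}+(-1)^{k}\binom{k+t}{k+1}\ge 0$, because $k(t+1)\ge k+t$. Controlling the longer tail for $k+2\le m\le k+t$ is the delicate point: I expect the cleanest route to be a secondary induction on the tail length $m-k$, or on $t$ via the companion recursion $b(n,m,t,k)=b(n,m,t-1,k)-b(n-1,m-1,t,k-1)$ (from Pascal applied to $\binom{k+t}{j}$), combined with the unimodality of $j\mapsto\binom{k+t}{j}\binom{n-j}{m-j}$ coming from \eqref{e20}. Proving that the positive term dominates the subtracted one in that companion recursion is the step I anticipate being genuinely hard.
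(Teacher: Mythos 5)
Your part (1) is correct and complete: the factorization $b(n,m,t,1)=\binom{n}{m}-(t+1)\binom{n-1}{m-1}=\frac{n-m(t+1)}{m}\binom{n-1}{m-1}$ together with $n\ge m(t+1)$ settles it. For part (2), the Pascal recursion $b(n,m,t,k)=b(n-1,m,t,k)+b(n-1,m-1,t,k)$ is correct, and so is the double induction that reduces everything to the boundary case $n=m(t+1)+k-1$ with $k+1\le m\le k+t$; your treatment of $m\le k$ via Lemma \ref{lem1} and of the boundary case $m=k+1$ is also fine. But the argument stops exactly where, by your own account, the real difficulty sits: for $k+2\le m\le k+t$ at $n=m(t+1)+k-1$ you only list candidate strategies (induction on the tail length $m-k$, the companion recursion in $t$, unimodality of the summands) and defer the step you call ``genuinely hard''. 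Since, as you correctly observe, $b$ can be negative just below the admissible range, the recursion cannot help at the boundary and no soft argument will close this case; none of the three suggested routes is developed far enough to tell whether it succeeds. This is a genuine gap, not a routine verification left to the reader.

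For comparison, the paper does not recurse in $n$ at all. It works directly with the summands $f(n,m,t,k,j)=\binom{k+t}{j}\binom{n-j}{m-j}$: it asserts, via \eqref{e19}--\eqref{e20}, that under the hypotheses ($k=1$, or $m\le k+t$, which is exactly what makes $\varphi(0)\le m(t+1)+k-1$) one has $f(n,m,t,k,j)\ge f(n,m,t,k,j+1)$ for all $0\le j\le k-1$, and then concludes by pairing consecutive terms of the alternating sum, $b=(f(0)-f(1))+(f(2)-f(3))+\cdots\ge 0$. That direct termwise estimate on the alternating sum is the ingredient your write-up is missing. A word of caution if you pursue this route: the ratio is actually $\frac{f(n,m,t,k,j)}{f(n,m,t,k,j+1)}=\frac{(j+1)(n-j)}{(m-j)(k+t-j)}$, so monotonicity starting from $j=0$ requires $n\ge m(k+t)$, which for $k,m\ge 2$ is stronger than $n\ge m(t+1)+k-1$ (e.g.\ $t=k=3$, $m=4$, $n=18$ gives $f(0)<f(1)$); so the sequence is in general only unimodal, and the pairing has to be adapted accordingly. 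In other words, both your reduction and the direct approach still require a careful estimate at the hard range of parameters, and that estimate is what is absent from your proposal.
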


\begin{proof}
First, note that 
$$\varphi(0)=m+k+t-1+(m-k)(k+t+1)=m(t+1)+k-1+(k-1)(m-t-k).$$
Hence, since $n\geq m(t+1)+k-1$, we have that $n\geq \varphi(0)$ for $k=1$ or $m\geq k+t$.
On the other hand, if $n\geq \varphi(0)$ then, according to a previous remark, we have that
$f(n,m,t,k,j)\geq f(n,m,t,k,j+1)$, for all $0\leq j\leq k-1$, and therefore
$$b(n,m,t,k)=(f(n,m,t,k,0)-f(n,m,t,k,1))+(f(n,m,t,k,2)-f(n,m,t,k,3))+\cdots \geq 0.$$
\end{proof}

\begin{lema}\label{lem3}
Let $n,m\geq 1$, $t\geq 3$ such that $m\geq t+3$ and $m(t+1)+1\leq n\leq (m+1)(t+1)$. Then $b(n,m,t,2)\geq 0$.
\end{lema}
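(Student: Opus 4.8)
The plan is to compute $b(n,m,t,2)$ explicitly and reduce the desired inequality to the nonnegativity of a quadratic polynomial in $n$. The first thing to observe is that, since here $k=2$ and $m\geq t+3>t+2=k+t$, the hypothesis of Lemma \ref{lem2}(2) fails, so the sign-alternation argument used there is not available: that argument rests on the monotonicity $f(n,m,t,k,j)\geq f(n,m,t,k,j+1)$ from \eqref{e20}, which needs $n\geq\varphi(0)$, and for $k=2$ one has $\varphi(0)=m(t+1)+1+(m-t-2)>m(t+1)+1$, so $n$ can be strictly below $\varphi(0)$ in our range. Hence I would work with the three-term sum directly. By \eqref{bnmtk},
$$b(n,m,t,2)=\binom{n}{m}-(t+2)\binom{n-1}{m-1}+\binom{t+2}{2}\binom{n-2}{m-2}.$$
Since $m\geq t+3\geq 6$ and $n\geq m(t+1)+1>m$, the factor $\binom{n-2}{m-2}$ is strictly positive, and using $\binom{n}{m}=\frac{n(n-1)}{m(m-1)}\binom{n-2}{m-2}$ together with $\binom{n-1}{m-1}=\frac{n-1}{m-1}\binom{n-2}{m-2}$, I would factor it out to obtain
\begin{align*}
b(n,m,t,2)&=\frac{\binom{n-2}{m-2}}{2m(m-1)}\,P(n),\\
P(n)&:=2(n-1)\bigl(n-(t+2)m\bigr)+(t+2)(t+1)m(m-1).
\end{align*}
Because $\binom{n-2}{m-2}>0$ and $2m(m-1)>0$, it suffices to prove $P(n)\geq 0$ on the interval $m(t+1)+1\leq n\leq (m+1)(t+1)$.

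The second step is to analyse $P$ as a quadratic in $n$ with positive leading coefficient $2$. Its vertex lies at $n^{*}=\frac{(t+2)m+1}{2}$, and I would verify the elementary inequality $n^{*}<m(t+1)+1$, which reduces to $0<mt+1$ and hence always holds. Consequently $P$ is increasing on the whole interval $[\,m(t+1)+1,(m+1)(t+1)\,]$, so its minimum there is attained at the left endpoint $n_0:=m(t+1)+1$, and it is enough to check $P(n_0)\geq 0$.

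The final step is the endpoint evaluation. Using $n_0-1=m(t+1)$ and $n_0-(t+2)m=-(m-1)$, a short computation gives
$$P(n_0)=-2m(m-1)(t+1)+(t+2)(t+1)m(m-1)=m(m-1)\,t(t+1)\geq 0,$$
equivalently $b(n_0,m,t,2)=\tfrac{t(t+1)}{2}\binom{n_0-2}{m-2}$, which is manifestly nonnegative. Since $P(n)\geq P(n_0)\geq 0$ and the prefactor $\binom{n-2}{m-2}/(2m(m-1))$ is positive, it follows that $b(n,m,t,2)\geq 0$ throughout the range, as required. I do not expect a genuine obstacle here: the only points demanding care are confirming the location of the vertex (so that the minimum sits at the left endpoint) and carrying out the endpoint simplification correctly, both of which are routine. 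It is worth noting that the computation in fact uses only $m\geq 2$ and $t\geq 1$, so the stronger hypotheses $t\geq 3$ and $m\geq t+3$ serve mainly to delimit the regime complementary to Lemma \ref{lem2}(2) rather than being essential to the positivity.
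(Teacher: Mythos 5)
Your proof is correct, and while it starts from the same point as the paper --- expanding $b(n,m,t,2)=\binom{n}{m}-(t+2)\binom{n-1}{m-1}+\binom{t+2}{2}\binom{n-2}{m-2}$ and factoring out $\binom{n-2}{m-2}$ --- it handles the resulting quadratic factor differently and, I would say, better. The paper bounds the bracket term by term using $t+1\leq \frac{n}{m}\leq t+2$ and $\frac{n-1}{m-1}>\frac{n}{m}$, arriving at the constant lower bound $\frac{1}{2}t^2-\frac{1}{2}t-2$, which is where the hypothesis $t\geq 3$ is consumed; note also that the paper's displayed formula \eqref{curling} has $\frac{n}{m}$ in the middle term where the exact coefficient is $\frac{n-1}{m-1}$, a discrepancy your exact treatment sidesteps entirely. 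You instead clear denominators to get the genuine quadratic $P(n)=2(n-1)\bigl(n-(t+2)m\bigr)+(t+2)(t+1)m(m-1)$, check that its vertex $n^{*}=\frac{(t+2)m+1}{2}$ lies to the left of $n_0=m(t+1)+1$, and evaluate $P(n_0)=m(m-1)t(t+1)\geq 0$ exactly; all three steps check out. This buys a strictly stronger statement: nonnegativity holds for all $m\geq 2$, $t\geq 1$ and all $n\geq m(t+1)+1$ (the upper bound on $n$ and the hypotheses $t\geq 3$, $m\geq t+3$ are not needed), whereas the paper's term-by-term estimate genuinely requires $t\geq 3$ to keep its constant nonnegative. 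Your opening remark correctly identifies why Lemma \ref{lem2}(2) does not apply here ($m\geq t+3>k+t$ for $k=2$), which is exactly the gap this lemma is meant to fill.
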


\begin{proof}
We have that 
\begin{equation}\label{curling}
b(n,m,t,2)=\binom{n-2}{m-2}\left(\frac{n(n-1)}{m(m-1)}-(t+2)\frac{n}{m} + \binom{t+2}{2} \right).
\end{equation}
From hypothesis, we have that $\frac{n-1}{m-1} > \frac{n}{m}\geq t+1$ and $\frac{n}{m}\leq t+2$. From \eqref{curling} it follows that
$$ b(n,m,t,2)\geq \binom{n-2}{m-2}\cdot \left( (t+1)^2 - (t+2)^2 + \frac{(t+2)(t+1)}{2} \right) = 
\binom{n-2}{m-2}\cdot \left( \frac{1}{2}t^2-\frac{1}{2}t -2\right). $$
Therefore, $b(n,m,t,2)\geq 0$, since $t\geq 3$.
\end{proof}

Now, we are able to prove the following result:

\begin{teor}\label{teo3}
Let $n,t\geq 1$ such that $n\leq (t+1)(t+3)$. Then $\qdepth(\mathbf m^t)=\left\lceil \frac{n}{t+1} \right\rceil$.
\end{teor}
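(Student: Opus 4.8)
The plan is to reduce the statement to the nonnegativity criterion of Corollary~\ref{crit} and then to settle the finitely many types of index $k$ using the three explicit lemmas already established. Set $m:=\left\lceil \frac{n}{t+1}\right\rceil$. By Theorem~\ref{teo} we already have $\qdepth(\mathbf m^t)\le m$, so only the reverse inequality is at stake; and by Corollary~\ref{crit} it suffices to verify that $b(n+k-1,m,t,k)\ge 0$ for every $1\le k\le nt-n-t+m$.

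Before the case analysis I would dispose of small $t$: the cases $t=1$ and $t=2$ are already covered (for all $n$, hence in particular under our hypothesis) by Corollary~\ref{cteo} and Theorem~\ref{main}(2), respectively, so I may assume $t\ge 3$. The only place where the standing hypothesis $n\le(t+1)(t+3)$ enters is through the resulting bound $m=\left\lceil\frac{n}{t+1}\right\rceil\le t+3$; this is exactly what will pin the troublesome range of $k$ down to a single value. Note also that by \eqref{toto1} the inequality $n\ge m(t+1)$ holds, which is precisely the standing hypothesis of Lemma~\ref{lem2}.

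Now I would run the case analysis on $k$. For $k\ge m$, Lemma~\ref{lem1} gives $b(n+k-1,m,t,k)=\binom{n-t-1}{m}$, and this is $\ge 0$ because $n\ge m(t+1)\ge t+1$ forces $n-t-1\ge 0$. For $k=1$, Lemma~\ref{lem2}(1) applies directly. For $2\le k\le m-1$ satisfying $m\le k+t$, Lemma~\ref{lem2}(2) applies; since $m\le t+3$ gives $m-t\le 3$, the inequality $m\le k+t$ fails in this band only for $k=2$ together with $m=t+3$. That single remaining case $(k,m)=(2,t+3)$ is precisely the content of Lemma~\ref{lem3}, whose hypotheses $t\ge 3$, $m\ge t+3$, and $m(t+1)+1\le n+1\le(m+1)(t+1)$ are all met, the last being equivalent to $m=\left\lceil\frac{n}{t+1}\right\rceil$ by \eqref{toto1}. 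As every admissible $k$ is thereby covered, $b(n+k-1,m,t,k)\ge 0$ throughout, and Corollary~\ref{crit} yields $\qdepth(\mathbf m^t)=m$.

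The main obstacle here is conceptual rather than computational: one must recognize that the hypothesis $n\le(t+1)(t+3)$, through $m\le t+3$, collapses the a priori wide band $2\le k\le m-t-1$ of indices unreachable by Lemmas~\ref{lem1} and \ref{lem2} down to the lone pair $(k,m)=(2,t+3)$, which is exactly the case Lemma~\ref{lem3} was crafted to handle (and which in turn forces the separate treatment of $t=1,2$, since Lemma~\ref{lem3} requires $t\ge 3$). Once this observation is in place, the remainder is routine bookkeeping: checking that the index ranges in the three lemmas align with $m=\left\lceil\frac{n}{t+1}\right\rceil$ and that the bound $m\le t+3$ genuinely squeezes the uncovered values of $k$ down to $k=2$ alone.
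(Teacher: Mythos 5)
Your proposal is correct and follows essentially the same route as the paper: dispose of $t\le 2$ separately, then verify the criterion of Corollary~\ref{crit} by splitting into $k\ge m$ (Lemma~\ref{lem1}), $k=1$ (Lemma~\ref{lem2}(1)), $3\le k\le m-1$ (Lemma~\ref{lem2}(2), using $m\le t+3\le k+t$), and $k=2$ (Lemma~\ref{lem3}). If anything, your treatment of $k=2$ is slightly more careful than the paper's, since you note that Lemma~\ref{lem3} is only needed when $m=t+3$ and that Lemma~\ref{lem2}(2) already covers $k=2$ when $m\le t+2$.
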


\begin{proof}
If $t=1$ then the conclusion follows from Corollary \ref{cteo}. Also, if $t=2$ then the conclusion follows from Theorem \ref{main}(2).
Hence, we can assume that $t\geq 3$. Let $m=\left\lceil \frac{n}{t+1} \right\rceil$. Note that, $n\leq (t+1)(t+3)$ implies $m\leq t+3$.
Also, $m(t+1)\leq n\leq m(t+1)+t$.

From Lemma \ref{lem1} it follows that 
\begin{equation}\label{ecu-1}
b(n+k-1,m,t,k) = \binom{n-t-1}{k} \geq 0\text{ for all } m\leq k\leq nt-n-t+m.
\end{equation}
Now, suppose that $k<m$. From Lemma \ref{lem2}(1) we have that 
\begin{equation}\label{ecu-2}
b(n,m,t,1)\geq 0.
\end{equation}
Also, from Lemma \ref{lem3} we have that
\begin{equation}\label{ecu-3}
b(n+1,m,t,2)\geq 0.
\end{equation}
Hence, we can assume that $3\leq k\leq m-1$. Since $m=t+3$ and $k\geq 3$ it follows that $m\leq k+t$. 
Therefore, from Lemma \ref{lem2}(2) it follows that
\begin{equation}\label{ecu-4}
b(n-k+1,m,t,k)\geq 0\text{ for all } 3\leq k\leq m-1.
\end{equation}
The conclusion follows from \eqref{ecu-1}, \eqref{ecu-2}, \eqref{ecu-3}, \eqref{ecu-4} and Corollary \ref{crit}.
\end{proof}

% We end our paper with the following conjecture:

% \begin{conj}\label{pi}
% For any integers $n,t\geq 1$, we have that $\qdepth(\mathbf m^t)=\left\lceil \frac{n}{t+1} \right\rceil$.
% \end{conj}

\subsection*{Acknowledgments}

%We gratefully acknowledge the use of the computer algebra system Cocoa (\cite{cocoa}) for our experiments.
The second author, Mircea Cimpoea\c s, was supported by a grant of the Ministry of Research, Innovation and Digitization, CNCS - UEFISCDI, 
project number PN-III-P1-1.1-TE-2021-1633, within PNCDI III.

% \subsection*{Data availability}

% Data sharing not applicable to this article as no data sets were generated or analyzed during the current study.

% \subsection*{Conflict of interest}

% The authors have no relevant financial or non-financial interests to disclose.

\end{document}